\newcommand{\prs}{\langle\;,\;\rangle}
\newcommand{\too}{\longrightarrow}
\newcommand{\om}{\omega}
\newcommand{\esp}{\quad\mbox{and}\quad}
\newcommand{\R}{\mathbb{R}}
\newcommand{\G}{{\mathfrak{g}}}
\newcommand{\h}{{\mathfrak{h}}}
\newcommand{\ad}{{\mathrm{ad}}}
\newcommand{\tr}{{\mathrm{tr}}}
\newcommand{\ric}{{\mathrm{ric}}}
\newcommand{\Li}{\mathrm{L}}
\newcommand{\Ri}{\mathrm{R}}
\newcommand{\J}{\mathrm{J}}
\newcommand{\di}{\displaystyle}
\newcommand{\Om}{\Omega}
\newcommand{\na}{\nabla}
\newcommand{\la}{\lambda}
\newcommand{\de}{\delta}
\newtheorem{theo}{Theorem}[section]
\newtheorem{pr}{Proposition}[section]
\newtheorem{exem}{Example}
\newtheorem{remark}{Remark}
\numberwithin{equation}{section}
\begin{document}

\begin{frontmatter}




\title{Cyclic Riemannian  Lie groups: description and curvatures}

\author[Abid]{Fatima-Ezzahrae Abid}
\address[Abid;Boucetta;Hamza]{Universit\'e Cadi-Ayyad,
	Facult\'e des sciences et techniques,
	BP 549 Marrakech Maroc\\abid.fatimaezzahrae@gmail.com, m.boucetta@uca.ac.ma, eloualihamza11@gmail.com
}
\author[Benayadi]{Sa\"id Benayadi}
\address[Benayadi]{Laboratoire IECL, Universit\'e de
	Lorraine, CNRS-UMR 7502, UFR MIM,
	Metz, France\\said.benayadi@univ-lorraine.fr}
 \author[Boucetta]{Mohamed Boucetta}
\author[Hamza]{Hamza El Ouali}
\author[Hicham Lebzioui]{Hicham Lebzioui\corref{mycorrespondingauthor}}
\cortext[mycorrespondingauthor]{Corresponding author}
\address[Hicham Lebzioui]{Universit\'e Sultan Moulay Slimane,
	\'{E}cole Sup\'{e}rieure de Technologie-Kh\'{e}nifra, B.P : 170, Kh\'{e}nifra, Maroc.\\h.lebzioui@usms.ma
}

\begin{abstract}
	
A cyclic Riemannian Lie group 
is a Lie group $G$ equipped with a left-invariant Riemannian metric $h$
that satisfies $\oint_{X,Y,Z}h([X,Y],Z)=0$ for any left-invariant vector fields 
$X,Y,Z$. The initial concept and exploration of these Lie groups were presented in Monatsh. Math. \textbf{176} (2015), 219-239. This paper builds upon the results from the aforementioned study by providing a complete description of cyclic Riemannian Lie groups and an in-depth analysis of their various curvatures. 

\end{abstract}

\begin{keyword}   Riemannian Lie groups\sep Cyclic metrics\sep Lie algebras. 

{\it{\bf 2020 Mathematics Subject Classification:}} 53C30, 53C25, 22E25, 22E46.

\end{keyword}

\end{frontmatter}






\section{Introduction}\label{section1}
A cyclic Riemannian Lie group 
is a Lie group $G$ equipped with a left-invariant Riemannian metric $h$
that satisfies $$h([X,Y],Z)+h([Y,Z],X)+h([Z,X],Y)=0$$ for any left-invariant vector fields 
$X,Y,Z$. The concept and initial exploration of these groups were introduced  in \cite{gadea}.  In this paper, we extend this initial study and provide a detailed description of connected and simply-connected cyclic Riemannian Lie groups. Specifically, we prove the following theorem. 
\begin{theo}\label{main} Let $(G,h)$ be a connected and simply-connected cyclic Riemannian Lie group. Then $(G,h)$ is isomorphic both as a Lie group and a Riemannian manifold to $$\R^r\times(G(q,p,\Om),h_0)\times\underbrace{ \widetilde{\mathrm{SL}(2,\R)}\times \ldots\times \widetilde{\mathrm{SL}(2,\R)}}_{
		\substack{m-\mbox{copies}
	}},\;r\geq0,m\geq0,p\geq q\geq0,$$ where:
	\begin{enumerate}
		\item $\R^r$ is the abelian flat Lie group, $\widetilde{\mathrm{SL}(2,\R)}$ is the universal cover of ${\mathrm{SL}(2,\R)}$ endowed with one of the left-invariant metrics described in Theorem \ref{sl},
		\item $(G(q,p,\Om),h_0)$ is the Riemannian Lie group  whose underlying manifold is  $\R^q\times\R^p$ and the group product is given by 
		\[ (s,t).(s',t')=(s+s',t+(e^{\langle s,\Om_1\rangle_0}t'_1,\ldots, e^{\langle s,\Om_p\rangle_0}t'_p)), \]$\Om=\left(\om_{ij}\right)_{1\leq i\leq q}^{1\leq j\leq p}$ is a real $(q,p)$-matrix such that $\Om^t$ has rank $q$, $(\Om_1,\ldots,\Om_p)$ are the columns of $\Om$,  $\prs_0$ is the canonical Euclidean product of $\R^q$ and $h_0$ is the left-invariant metric
		\[ h_0=\sum_{i=1}^qds_i^2+\sum_{i=1}^pe^{-2\langle s,\Om_i\rangle_0}dt_i^2. \]
	\end{enumerate}   
	
\end{theo}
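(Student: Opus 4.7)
My plan is to work at the Lie algebra level with the metric Lie algebra $(\g,\prs)$ associated to $(G,h)$: as $G$ is connected and simply connected, any orthogonal decomposition of $\g$ into ideals integrates to a Riemannian product decomposition of $G$, and the cyclic condition becomes the algebraic identity $\langle[X,Y],Z\rangle+\langle[Y,Z],X\rangle+\langle[Z,X],Y\rangle=0$, which is the only structural input used throughout.

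The first step is to peel off the centre $\mathfrak{z}:=Z(\g)$. For $Z\in\mathfrak{z}$, $X\in\mathfrak{z}^{\perp}$ and any $Y\in\g$ the cyclic identity reduces, thanks to $[Z,\,\cdot\,]=0$, to $\langle[X,Y],Z\rangle=0$, showing that $\mathfrak{z}^{\perp}$ is an ideal and that $\g=\mathfrak{z}\oplus\mathfrak{z}^{\perp}$ is an orthogonal decomposition into ideals; the factor $\mathfrak{z}\cong\R^{r}$ integrates to the flat abelian part of the stated product. A short induction on $\dim\g$ using this observation yields the lemma that any cyclic Riemannian nilpotent Lie algebra is abelian (a nonabelian nilpotent Lie algebra has nonzero proper centre, and induction kills $\mathfrak{z}^{\perp}$). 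I then replace $\g$ by $\mathfrak{z}^{\perp}$ and assume $Z(\g)=0$.

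Assume next that $\g$ is solvable. Its derived algebra is nilpotent, hence abelian by the lemma, so the nilradical $\mathfrak{n}$ is abelian and contains $[\g,\g]$. Setting $\mathfrak{h}:=\mathfrak{n}^{\perp}$, the cyclic identity on $h_{1},h_{2}\in\mathfrak{h}$ and $n\in\mathfrak{n}$ collapses (since $[\g,\mathfrak{n}]\subset\mathfrak{n}$ and $\mathfrak{n}\perp\mathfrak{h}$) to $\langle[h_{1},h_{2}],n\rangle=0$, so $\mathfrak{h}$ is an abelian subalgebra; applied to $h\in\mathfrak{h}$ and $n_{1},n_{2}\in\mathfrak{n}$ (using $[n_{1},n_{2}]=0$) it gives $\langle[h,n_{1}],n_{2}\rangle=\langle[h,n_{2}],n_{1}\rangle$, so each $\ad_{h}|_{\mathfrak{n}}$ is $\prs$-symmetric. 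The commuting family $\{\ad_{h}\}_{h\in\mathfrak{h}}$ of symmetric operators is simultaneously orthogonally diagonalizable, producing an orthonormal basis $(t_{1},\ldots,t_{p})$ of $\mathfrak{n}$ of joint eigenvectors. Riesz duality on $\mathfrak{h}\cong\R^{q}$ converts the weight functionals into vectors $\Om_{1},\ldots,\Om_{p}\in\mathfrak{h}$ with $[h,t_{i}]=\langle h,\Om_{i}\rangle_{0}t_{i}$: this is exactly the bracket of the Lie algebra of $G(q,p,\Om)$, and integrating identifies $(G,h)$ with $(G(q,p,\Om),h_{0})$. The rank-$q$ condition on $\Om^{t}$ is equivalent to $Z(\g)=0$, since an element of $\mathfrak{h}$ in the kernel of $\Om^{t}$ would be central.

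The main obstacle is the non-solvable case. Writing $\g=\mathfrak{r}\oplus\mathfrak{s}$ for a Levi decomposition with $\mathfrak{r}$ the radical and $\mathfrak{s}$ semisimple, I must show that this decomposition is orthogonal and that $[\mathfrak{r},\mathfrak{s}]=0$, so that $\mathfrak{s}$ is an ideal and the two pieces separate cleanly. My approach would be to apply the solvable analysis of the previous paragraph to $\mathfrak{r}$ first, so that the action of any $s\in\mathfrak{s}$ by derivation on $\mathfrak{r}$ must preserve the joint eigenspace decomposition of $\mathfrak{n}$ under the symmetric family $\{\ad_{h}\}_{h\in\mathfrak{h}}$; then combine the cyclic identity applied across $\mathfrak{r}$ and $\mathfrak{s}$ with the semisimplicity of $\mathfrak{s}$ and the positive definiteness of $\prs$ to force this derivation to vanish. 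Once $[\mathfrak{r},\mathfrak{s}]=0$ is established, Theorem \ref{sl} identifies the cyclic Riemannian simple factors of $\mathfrak{s}$ as copies of $\mathfrak{sl}(2,\R)$ with the admissible metrics, and the three orthogonal pieces assemble to give the stated isomorphism of Riemannian Lie groups.
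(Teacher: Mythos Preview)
Your handling of the centre and of the solvable case is correct and close to the paper's Proposition~\ref{pr8} and Theorem~\ref{pr5} (the paper works with $[\g,\g]$ where you use the nilradical, but the arguments run in parallel and lead to the same $G(q,p,\Om)$ structure).

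The non-solvable step, however, is only a plan and is missing the key ideas. Starting from an \emph{arbitrary} Levi complement $\mathfrak s$ gives you neither $\mathfrak r\perp\mathfrak s$ nor $[\mathfrak r,\mathfrak s]=0$ for free, and ``combining the cyclic identity with semisimplicity and positive definiteness'' is not a mechanism. The paper's device is to set $\mathfrak s:=\mathrm{Rad}(\g)^\perp$ from the outset: the cyclic identity with two entries in $\mathfrak s$ and one in $\mathrm{Rad}(\g)$ shows $\mathfrak s$ is a subalgebra (hence a Levi factor, orthogonal by construction); with one entry in $\mathfrak s$ and two in $\mathrm{Rad}(\g)$ it shows each $\ad_x|_{\mathrm{Rad}(\g)}$ is symmetric for $x\in\mathfrak s$; then $\ad_{[x,y]}|_{\mathrm{Rad}(\g)}=[\ad_x,\ad_y]|_{\mathrm{Rad}(\g)}$ is simultaneously symmetric and skew-symmetric, hence zero, and $\mathfrak s=[\mathfrak s,\mathfrak s]$ forces $[\mathfrak s,\mathrm{Rad}(\g)]=0$. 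You also omit the mutual orthogonality of the simple ideals \emph{inside} $\mathfrak s$: Theorem~\ref{sl} treats a single simple factor only, and the paper needs the invertible anti-derivation $\de$ attached to the Killing form (Proposition~\ref{quadratic}) to see that each simple ideal is $\de$-invariant, whence $\langle\g_i,\g_j\rangle=B(\de\g_i,\g_j)=0$ for $i\neq j$ (Theorem~\ref{semisimple}).
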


This classification enables us to conduct a thorough investigation of the different curvatures of cyclic Riemannian Lie groups.

The paper is organized as follows: In Section \ref{section2}, we prove Theorem \ref{main}. In Section \ref{section3}, we investigate the properties of the different curvatures of cyclic Riemannian Lie groups. We prove that the scalar curvature of a cyclic Riemannian Lie group is always negative. Additionally,  we determine cyclic Riemannian Lie groups with constant sectional curvature, negative sectional curvature, negative Ricci curvature, constant Ricci curvature,  parallel Ricci curvature or parallel curvature (see Theorems \ref{scalar}-\ref{symmetric}). Finally, based on the results above, we give  the list of connected and simply-connected cyclic Riemannian Lie groups up to dimension 5 (see Table \ref{1}).

\section{Characterization of cyclic Riemannian Lie groups: proof of Theorem \ref{main}}\label{section2}

In this section, we give some fundamental properties of cyclic Riemannian Lie groups leading to a proof of Theorem \ref{main}.

Let $(G,h)$ be a connected Lie group endowed with a left-invariant Riemannian metric. It is a well-known fact that the geometric properties of $(G,h)$ can be read at the level of its Lie algebra $\G$ identified to the vector space of left-invariant vector field. The metric $h$ induces a scalar product $\prs$  on $\G$ which determines $h$ entirely.

We call $(G,h)$ a cyclic Riemannian Lie group if, for any $X,Y,Z\in\G$,
\begin{equation}\label{cyclic} \langle [X,Y],Z\rangle+\langle[Y,Z],X\rangle+\langle[Z,X],Y\rangle=0. \end{equation}
The Levi-Civita
connection of $h$ defines a product $(X,Y)\mapsto X\star Y$ on $\G$ called \emph{Levi-Civita
	product} given by  Koszul's formula
\begin{equation}\label{eq1}2\langle
X\star Y,Z\rangle=\langle[X,Y],Z\rangle
+\langle[Z,X],Y\rangle+\langle[Z,Y],X\rangle.
\end{equation}
For any $X\in\G$, we denote by $\mathrm{L}_X:\G\too\G$ and $\mathrm{R}_X:\G\too\G$,
respectively, the left multiplication and the right multiplication by $X$ given by
$$\mathrm{L}_XY=X\star Y\esp\mathrm{R}_XY=Y\star X.$$ For any $X\in\G$, $\mathrm{L}_X$ is
skew-symmetric with
respect to $\prs$ and
$\ad_X=\mathrm{L}_X-\mathrm{R}_X$, where
$\ad_X:\G\too\G$ is given by $\ad_XY=[X,Y]$. Let  $[\G,\G]$ be the derived ideal of $\G$ and $Z(\G)$ its center.  Put
$N_\ell(\G)=\left\{X\in\G,\mathrm{L}_X=0\right\}$ and $N_r(\G)=\left\{X\in\G,\mathrm{R}_X=0\right\}.$
We have obviously
\begin{equation}\label{eq11} [\G,\G]^\perp=\{X\in\G,\mathrm{R}_X=\mathrm{R}_X^*\}.
\end{equation} 
For any $X\in\G$, let $\J_X$ be the skew-symmetric endomorphism given by $\J_XY=\ad_Y^tX$ where $\ad_Y^t$ is the adjoint of $\ad_Y$ with respect to $\prs$. Note that
\begin{equation}
\label{eq8}[\G,\G]^\perp=\{X\in\G,\J_X=0\}.\end{equation}

The restriction $\mathrm{K}$ of the curvature to $\G$ and $\ric$ the Ricci curvature  are given by
\[ \mathrm{K}(X,Y)=\Li_{[X,Y]}-[\Li_X,\Li_Y]\esp \ric(X,Y)=\tr(Z\mapsto \mathrm{K}(X,Z)Y). \]

\begin{pr}\label{pr3} Let $(G,h)$ be a cyclic Riemannian Lie group. Then the following assertions hold.
	\begin{enumerate}
		\item[$(i)$]  For any $X,Y,Z\in\G$,
		\[ \langle X\star Y,Z\rangle=-\langle X,[Y,Z]\rangle. \]In particular, $\Li_X=-\J_X$ and $\Ri_X=-\ad_X^t$.
		\item[$(ii)$] $[\G,\G]^\perp=N_\ell(\G)=\{X\in\G,\ad_X^t=\ad_X\}$.
		In particular, $[\G,\G]^\perp$ is an abelian subalgebra.
		\item[$(iii)$] $Z(\G)=(\G\star\G)^\perp=N_\ell(\G)\cap N_r(\G)$. In particular, $Z(\G)\subset[\G,\G]^\perp$.
	\end{enumerate}
	
\end{pr}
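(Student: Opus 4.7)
The whole proposition cascades from part $(i)$, so I would prove $(i)$ first by a one-line manipulation of Koszul's formula \eqref{eq1} against the cyclic identity \eqref{cyclic}. Rewriting the cyclic identity as $\langle[X,Y],Z\rangle+\langle[Z,X],Y\rangle=\langle[Z,Y],X\rangle$ and substituting into Koszul's formula yields $2\langle X\star Y,Z\rangle=2\langle[Z,Y],X\rangle=-2\langle X,[Y,Z]\rangle$. Unwinding the definitions $\Li_XY=X\star Y$ and $\J_XY=\ad_Y^tX$ gives $\Li_X=-\J_X$, while $\langle \Ri_XY,Z\rangle=\langle Y\star X,Z\rangle=-\langle Y,[X,Z]\rangle=-\langle\ad_X^tY,Z\rangle$ gives $\Ri_X=-\ad_X^t$.

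For $(ii)$, the identity $\Li_X=-\J_X$ together with \eqref{eq8} identifies $N_\ell(\G)$ with $[\G,\G]^\perp$. To add the third description, I would use the decomposition $\ad_X=\Li_X-\Ri_X=-\J_X+\ad_X^t$, so that $\ad_X-\ad_X^t=-\J_X$; thus $\ad_X$ is symmetric precisely when $\J_X=0$, i.e.\ precisely when $X\in[\G,\G]^\perp$. For the fact that $[\G,\G]^\perp$ is abelian, the cleanest route is to apply the cyclic identity \eqref{cyclic} directly: given $X,Y\in[\G,\G]^\perp$ and arbitrary $Z\in\G$, two of the three terms in \eqref{cyclic} vanish (since $X$ and $Y$ are orthogonal to $[\G,\G]$), forcing $\langle[X,Y],Z\rangle=0$ for every $Z$, hence $[X,Y]=0$.

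For $(iii)$, the equality $Z(\G)=N_\ell(\G)\cap N_r(\G)$ follows from $\ad_X=\Li_X-\Ri_X$: the inclusion $\supset$ is immediate, and conversely $X\in Z(\G)$ gives $\ad_X=0$ together with $\J_X=0$ (since $Z(\G)\subset[\G,\G]^\perp$ by $(ii)$), whence $\Li_X=-\J_X=0$ and $\Ri_X=-\ad_X^t=0$. The equality $Z(\G)=(\G\star\G)^\perp$ is then another one-line consequence of $(i)$: $X\perp Y\star Z$ for all $Y,Z$ amounts to $\langle Y,[Z,X]\rangle=0$ for all $Y,Z$, which is the same as $\ad_X=0$. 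The inclusion $Z(\G)\subset[\G,\G]^\perp$ follows trivially from $N_\ell(\G)\cap N_r(\G)\subset N_\ell(\G)=[\G,\G]^\perp$.

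I do not expect any serious obstacle; the only real content is the reduction in $(i)$, and everything else is bookkeeping with the relations $\Li_X=-\J_X$, $\Ri_X=-\ad_X^t$, and $\ad_X=\Li_X-\Ri_X$. The one point deserving care is justifying that $[\G,\G]^\perp$ is abelian without circular reasoning; using the cyclic identity directly (rather than the symmetry of $\ad_X$) keeps the argument clean.
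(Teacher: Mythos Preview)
Your proof is correct and follows essentially the same route as the paper, which derives $(i)$ from Koszul's formula \eqref{eq1} combined with the cyclic identity \eqref{cyclic}, and then reads off $(ii)$ and $(iii)$ from the resulting identities together with \eqref{eq8} and \eqref{eq11}. The only cosmetic differences are that you justify the abelianness of $[\G,\G]^\perp$ via the cyclic identity (one could equally note $[X,Y]=\Li_XY-\Li_YX=0$ for $X,Y\in N_\ell(\G)$), and in $(iii)$ your detour through $\J_X$ is unnecessary: from $\ad_X=0$ one gets $\Ri_X=-\ad_X^t=0$ and then $\Li_X=\ad_X+\Ri_X=0$ directly.
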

\begin{proof}\begin{enumerate}
		\item[$(i)$] It is an immediate consequence of \eqref{eq1} and \eqref{cyclic}.  
		\item[$(ii)$] It is a consequence of $(i)$, \eqref{eq8} and \eqref{eq11}.
		\item[$(iii)$] It is an immediate consequence of \eqref{cyclic}.\qedhere
	\end{enumerate}
\end{proof}

As an immediate consequence of Proposition \ref{pr3}, we recover \cite[Proposition 5.5]{gadea}.

\begin{pr}\label{pr8} Let $(G,h)$ be a   cyclic Riemannian  Lie group. Then:
	\begin{enumerate}
		\item if $G$ is nilpotent then $G$ is abelian;
		\item if $G$ is solvable then it is 2-solvable.
	\end{enumerate}
	
\end{pr}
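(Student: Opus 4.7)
My plan is to deduce both assertions from the inclusion $Z(\G)\subset[\G,\G]^\perp$ established in Proposition \ref{pr3}(iii), together with the obvious observation that the cyclic identity \eqref{cyclic} is inherited by any Lie subalgebra of $\G$ equipped with the restricted scalar product. So any subalgebra of a cyclic Riemannian Lie algebra is itself cyclic, and this will let me bootstrap (1) into (2).

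For assertion (1), I would argue by contradiction at the level of the Lie algebra $\G$. Suppose $\G$ is nilpotent and not abelian, and introduce the lower central series $\G^0=\G$, $\G^{i+1}=[\G,\G^i]$. Let $k$ be the smallest integer with $\G^k=0$; non-abelianness forces $k\geq 2$. Then $\G^{k-1}\neq 0$, and by the very definition of $k$ one has $[\G,\G^{k-1}]=\G^k=0$, so $\G^{k-1}\subset Z(\G)$. Since $k-1\geq 1$, one also has $\G^{k-1}\subset[\G,\G]$. Combining with Proposition \ref{pr3}(iii) yields
\[
\G^{k-1}\subset Z(\G)\cap[\G,\G]\subset[\G,\G]^\perp\cap[\G,\G]=\{0\},
\]
a contradiction. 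Hence $\G$, and therefore the connected group $G$, is abelian.

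For assertion (2), I would invoke the classical fact that over $\R$ the derived ideal $[\G,\G]$ of a solvable Lie algebra is nilpotent (a direct consequence of Lie's theorem applied to the complexification). The subalgebra $[\G,\G]$, endowed with the induced scalar product, is a cyclic Riemannian Lie algebra by the remark in the first paragraph. Applying part (1) to it gives that $[\G,\G]$ is abelian, i.e.\ $[[\G,\G],[\G,\G]]=0$, which is exactly 2-solvability of $\G$.

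The argument is short and modular: the only substantive input is Proposition \ref{pr3}(iii); there is no real obstacle, just the two standard structural facts about nilpotent and solvable Lie algebras (existence of a nonzero piece in the center of any nilpotent Lie algebra, and nilpotency of $[\G,\G]$ for $\G$ solvable over $\R$). Notably, no manipulation of the Levi-Civita product or curvature tensor is needed beyond what Proposition \ref{pr3} already encodes.
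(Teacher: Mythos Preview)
Your proof is correct and follows essentially the same route as the paper: both use Proposition~\ref{pr3}(iii) to get $Z(\G)\cap[\G,\G]=\{0\}$, contradict this with the last nonzero term of the lower central series for part~(1), and then deduce part~(2) from~(1) via nilpotency of the derived ideal. Your write-up is simply more explicit in spelling out the lower-central-series argument and the (obvious) inheritance of the cyclic identity by subalgebras, which the paper leaves implicit.
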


\begin{proof} According to Proposition \ref{pr3} $(iii)$, $[\G,\G]\cap Z(\G)\subset [\G,\G]\cap[\G,\G]^\perp=\{0\}$ and if $\G$ is nilpotent non abelian then $[\G,\G]\cap Z(\G)\not=\{0\}$ which completes the proof of the first part. The second part is a consequence of the first one since if $G$ is solvable then its derived ideal is nilpotent.
\end{proof}

Let $(G,\bold{k})$ be a connected Lie group endowed with a bi-invariant pseudo-Riemannian metric. This means that, for any $X,Y,Z\in\G$,
\begin{equation}\label{qua}
\bold{k}([X,Y],Z)+\bold{k}([X,Z],Y)=0.
\end{equation}
Note that if $\bold{k}$ is Riemannian then   $G$ is isomorphic to the product of an abelian Lie group and a compact semi-simple Lie group (see \cite{milnor}).

Let $h$ be a left-invariant metric on $G$ determined by its restriction $\prs$ to $\G$. Then there exists an invertible linear map  $\de:\G\too\G$ given by the relation
\[ \langle u,v\rangle=\bold{k}(\de(u),v). \]

\begin{pr}\label{quadratic} With the notations and hypothesis above the following assertions are equivalent:
	\begin{enumerate}
		\item[$(i)$] $(G,h)$ is a cyclic Riemannian Lie group.
		\item[$(ii)$] The endomorphism $\de$ is a symmetric invertible anti-derivation of $\G$, i.e., for any $X,Y,Z\in\G$,
		\[ \de([X,Y])=-[\de(X),Y]-[X,\de(Y)]. \]
		
	\end{enumerate}
	
\end{pr}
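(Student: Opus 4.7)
The plan is a direct computation that rewrites the cyclic identity using the defining relation $\langle u,v\rangle=\bold{k}(\delta(u),v)$ and the bi-invariance of $\bold{k}$.

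First, I would record two preliminary properties of $\delta$ that are automatic from the setup. Invertibility follows because $\prs$ and $\bold{k}$ are both non-degenerate. Symmetry of $\delta$ (with respect to $\bold{k}$, equivalently with respect to $\prs$) follows from the symmetry of $\prs$: for all $u,v\in\G$,
\[ \bold{k}(\delta(u),v)=\langle u,v\rangle=\langle v,u\rangle=\bold{k}(\delta(v),u)=\bold{k}(u,\delta(v)). \]
So in Proposition \ref{quadratic} the substantive content of $(ii)$ is the anti-derivation property; ``symmetric'' and ``invertible'' come for free.

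Next, I would translate the cyclic condition \eqref{cyclic} via $\delta$. For any $X,Y,Z\in\G$,
\[ \langle[X,Y],Z\rangle+\langle[Y,Z],X\rangle+\langle[Z,X],Y\rangle=\bold{k}(\delta[X,Y],Z)+\bold{k}(\delta[Y,Z],X)+\bold{k}(\delta[Z,X],Y). \]
Using $\bold{k}$-symmetry of $\delta$ on the last two terms rewrites this as
\[ \bold{k}(\delta[X,Y],Z)+\bold{k}([Y,Z],\delta X)+\bold{k}([Z,X],\delta Y). \]
Now I apply the $\ad$-invariance of $\bold{k}$ (equation \eqref{qua}) in the form $\bold{k}([A,B],C)=\bold{k}(A,[B,C])$ to the second and third terms, obtaining
\[ \bold{k}\bigl(Z,\,\delta[X,Y]+[\delta X,Y]+[X,\delta Y]\bigr). \]

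Finally, since $\bold{k}$ is non-degenerate, this expression vanishes for all $Z$ if and only if
\[ \delta[X,Y]+[\delta X,Y]+[X,\delta Y]=0, \]
which is the anti-derivation identity in $(ii)$. Every step in the chain is an equivalence, so the two conditions match. The ``main obstacle'' is really only bookkeeping: keeping the signs straight in the $\ad$-invariance identity and being careful that the $\bold{k}$-symmetry of $\delta$ (needed to move $\delta$ off the bracket in two of the three cyclic terms) is indeed automatic, which is why it was worth isolating at the start.
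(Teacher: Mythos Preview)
Your proof is correct and follows essentially the same route as the paper's: rewrite each term of the cyclic sum via $\langle u,v\rangle=\bold{k}(\delta u,v)$ and use the $\ad$-invariance \eqref{qua} to collect everything as $\bold{k}(\cdot,Z)$, then conclude by non-degeneracy. You are simply more explicit than the paper, which compresses the computation into one line and does not spell out that the symmetry and invertibility of $\delta$ are automatic from the setup (a point worth making, since the statement lists them as part of $(ii)$).
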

\begin{proof} For any $X,Y,Z\in\G$,  by using \eqref{qua}, we get
	\begin{align*} 
	&\oint_{X,Y,Z}	\langle[X,Y],Z\rangle =\bold{k}(\de([X,Y]),Z)+\bold{k}([\de(X),Y],Z)
	+\bold{k}([X,\de(Y)],Z) \end{align*}
	and the result follows.	
\end{proof}
We give now a different proof of \cite[Theorem 4.2]{gadea}.

\begin{pr}\label{compact} Let $(G,\bold{k})$ be a Lie group endowed with a bi-invariant Riemannian metric. If $G$  carries a cyclic left-invariant Riemannian metric then $G$ is abelian. In particular, a compact semi-simple Lie group cannot be a cyclic Riemannian Lie group.
	
\end{pr}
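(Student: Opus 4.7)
The plan is to exploit Proposition \ref{quadratic} to translate the existence of a cyclic left-invariant metric into the existence of a very restricted algebraic object on $\G$, and then to use the positivity coming from the Riemannian hypothesis on both metrics to force $\G$ to be abelian.

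First, I would apply Proposition \ref{quadratic} to the given data $(\bold{k},h)$: since $h$ is cyclic, the invertible linear map $\de:\G\to\G$ defined by $\langle u,v\rangle=\bold{k}(\de(u),v)$ is a $\bold{k}$-symmetric anti-derivation of $\G$, namely
\[ \de([X,Y])=-[\de(X),Y]-[X,\de(Y)] \quad\text{for all }X,Y\in\G. \]
Next, I would observe the extra piece of information that both $\bold{k}$ and $\prs$ are \emph{positive definite}: for any $u\neq 0$ one has $\bold{k}(\de(u),u)=\langle u,u\rangle>0$, and since $\bold{k}$ is itself positive definite this means that $\de$ is a $\bold{k}$-symmetric positive-definite operator. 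Consequently $\de$ is $\bold{k}$-diagonalizable with \emph{strictly positive} spectrum, and $\G$ decomposes as the $\bold{k}$-orthogonal direct sum of eigenspaces $V_{\la_1}\oplus\cdots\oplus V_{\la_k}$ with $\la_i>0$.

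The key step is then to bracket eigenvectors. For $X\in V_{\la_i}$ and $Y\in V_{\la_j}$, the anti-derivation identity gives
\[ \de([X,Y])=-\la_i[X,Y]-\la_j[X,Y]=-(\la_i+\la_j)[X,Y]. \]
Thus $[X,Y]$ either vanishes or is a $\de$-eigenvector for the eigenvalue $-(\la_i+\la_j)<0$. Since $\de$ has only positive eigenvalues, the second alternative is impossible, so $[V_{\la_i},V_{\la_j}]=0$ for all $i,j$. Summing over the decomposition yields $[\G,\G]=0$, i.e.\ $\G$ is abelian, hence $G$ is abelian (as $G$ is connected). For the last sentence, a compact semi-simple Lie group admits a bi-invariant Riemannian metric (the negative of the Killing form), so if it were also cyclic Riemannian the previous argument would force it to be abelian, a contradiction.

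There is no serious obstacle here: the only thing that requires care is pinning down that $\de$ is simultaneously symmetric and positive definite with respect to $\bold{k}$, which is precisely what allows one to diagonalize it with positive eigenvalues and then read off the sign contradiction. Everything else is a clean application of Proposition \ref{quadratic} and linear algebra.
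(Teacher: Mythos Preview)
Your proposal is correct and follows essentially the same approach as the paper: apply Proposition~\ref{quadratic} to obtain the $\bold{k}$-symmetric invertible anti-derivation $\de$, observe that positivity of both metrics forces all eigenvalues of $\de$ to be positive, and then use the anti-derivation identity on eigenvectors to conclude $[X,Y]=0$. Your write-up is slightly more explicit about why $\de$ is positive definite, but the argument is the same.
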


\begin{proof} Suppose that $\prs$ is the restriction of a cyclic Riemannian metric on $\G$. Then, according to Proposition \ref{quadratic}, $\de$ given by $\langle u,v\rangle=\bold{k}(\de(u),v)$ is an invertible anti-derivation and its eigenvalues are positive. Thus
	\[ \G=E_1\oplus\ldots\oplus E_r \]where $E_i$ is the eigenspace associated to $\la_i$. For any $X\in E_i$ and $Y\in E_j$,
	\[ \de([X,Y])=-(\la_i+\la_j)[X,Y]. \]
	Since $-(\la_i+\la_j)<0$ then $[X,Y]=0$. This shows that $\G$ is abelian.
\end{proof}

In \cite{gadea}, it was proven that the universal covering group $\widetilde{\mathrm{SL}(2,\R)}$ of $\mathrm{SL}(2,\R)$ is the only connected and simply-connected simple  cyclic Riemannian Lie group. We give now another proof of this result and complete it by classifying all cyclic Riemannian metrics on $\widetilde{SL(2,\R)}$ and exhibiting their Ricci and scalar curvatures.

We consider $\mathrm{sl}(2,\R)$ and $\mathcal{B}=(X_1,X_2,X_3)$ the basis given by
\[ X_1=\left(\begin{matrix}
0&1\\1&0
\end{matrix}  \right),\; X_2=\left(\begin{matrix}
0&1\\-1&0
\end{matrix}  \right)\esp X_3=\left(\begin{matrix}
1&0\\0&-1
\end{matrix}  \right). \]
The Lie brackets are given by
\[ [X_1,X_2]=2X_3,\;[X_2,X_3]=-2X_1\esp [X_3,X_2]=2X_1. \]

\begin{theo}\label{sl} The universal covering group $\widetilde{\mathrm{SL}(2,\R)}$ of $\mathrm{SL}(2,\R)$ is the only connected and simply-connected simple  cyclic Riemannian Lie group. Let $\prs$ be  the restriction of a cyclic left-invariant Riemannian metric of $\widetilde{\mathrm{SL}(2,\R)}$ to its Lie algebra $\mathrm{sl}(2,\R)$, $\ric$ its Ricci curvature and $\sigma$ its scalar curvature. Then,
	up to an automorphism of $\mathrm{sl}(2,\R)$, the matrix of $\prs$ in $\mathcal{B}$ is given by
	\[ \left(\begin{array}{ccc}
	\mu+\nu  & 0 & 0 
	\\
	0 & \mu  & 0 
	\\
	0 & 0 & \nu  
	\end{array}\right),\quad \mu>\nu>0,\;\ric
	=8\mathrm{Diag}(1,-1,-1)\esp \sigma=\frac{-8 \mu^{2}-8 \mu  \nu -8 \nu^{2}}{\left(\mu +\nu \right) \mu  \nu}.
	\]
\end{theo}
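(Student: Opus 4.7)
My plan is to treat Theorem \ref{sl} in three stages: ruling out all non-compact simple Lie algebras other than $\mathrm{sl}(2,\R)$, placing the cyclic metric in diagonal canonical form, and computing the curvatures. Throughout I use Proposition \ref{quadratic} applied to the Killing form $B$ of $\G$. Proposition \ref{compact} rules out the compact case, so $B$ is non-degenerate and Proposition \ref{quadratic} yields an invertible $B$-symmetric anti-derivation $\de$ with $\prs=B(\de\cdot,\cdot)$. Since $\prs$ is positive definite and $B$ is non-degenerate, a standard simultaneous-diagonalization argument shows that $\de$ has real, non-zero eigenvalues and is diagonalizable.

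For the first stage, write $\G=\bigoplus_{\la\in\Lambda}\G_\la$ for the eigenspace decomposition of $\de$. The anti-derivation identity applied to eigenvectors forces
\[ [\G_\la,\G_\mu]\subset\G_{-\la-\mu}, \]
the invertibility of $\de$ gives $0\notin\Lambda$, and $B$-symmetry makes $B$ block-diagonal with each $B|_{\G_\la}$ non-degenerate; in particular $[\G_\la,\G_{-\la}]=\{0\}$. A short case analysis on the finite set $\Lambda\subset\R\setminus\{0\}$, combined with simplicity of $\G$, shows that $|\Lambda|=3$, $\la_1+\la_2+\la_3=0$, and each $\G_{\la_i}$ is one-dimensional, so $\dim\G=3$; non-compactness then forces $\G\cong\mathrm{sl}(2,\R)$. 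I expect this combinatorial case analysis to be the main obstacle in the proof, since each candidate spectrum $\Lambda$ of higher cardinality or with non-trivial multiplicities must be excluded by combining the bracket shift, the block non-degeneracy of $B$, and simplicity.

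For the second stage, the eigenvectors of $\de$ yield a basis in which $\prs$ and $B$ are simultaneously diagonal. An automorphism of $\mathrm{sl}(2,\R)$ identifies this basis with $\mathcal{B}=(X_1,X_2,X_3)$ up to the $X_2\leftrightarrow X_3$ symmetry; a direct computation of $B$ in $\mathcal{B}$ shows that $X_1$ is the $B$-negative direction. The constraint $\la_1+\la_2+\la_3=0$ translates, via $\la_i=\prs_{ii}/B_{ii}$ and the signs of the $B_{ii}$, into the single linear relation $\prs_{11}=\prs_{22}+\prs_{33}$; writing $\mu:=\prs_{22}$, $\nu:=\prs_{33}$ and using the residual $\mu\leftrightarrow\nu$ freedom to assume $\mu>\nu>0$ yields the displayed matrix.

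For the third stage, I use Proposition \ref{pr3}(i): $\Li_X=-\J_X$ with $\J_X$ skew-symmetric and defined by $\langle\J_X Y,Z\rangle=\langle X,[Y,Z]\rangle$. In the diagonal basis and with the explicit brackets of $\mathrm{sl}(2,\R)$, each $\Li_{X_i}$ becomes a $3\times 3$ matrix whose entries are elementary in $\mu$ and $\nu$. The curvature operator $\mathrm{K}(X_i,X_j)=\Li_{[X_i,X_j]}-[\Li_{X_i},\Li_{X_j}]$ then admits a direct evaluation on each basis vector, and tracing in the middle slot produces $\ric$ and $\sigma$ in the stated form.
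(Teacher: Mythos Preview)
Your approach is sound and is genuinely different from the paper's. The paper's proof is essentially two citations: it invokes \cite[Theorem 5.1]{BH} to conclude that a simple Lie algebra admitting an invertible anti-derivation has complexification $\mathrm{sl}(2,\mathbb{C})$, hence is a real form thereof, and then invokes the classification \cite[Theorem 3.6]{Lee} of all left-invariant metrics on $\widetilde{\mathrm{SL}(2,\R)}$, saying only that it is ``an easy task'' to pick out the cyclic ones and compute their curvatures. Your route is self-contained: the eigenspace analysis of $\de$ with the shift rule $[\G_\la,\G_\mu]\subset\G_{-\la-\mu}$ is precisely the content behind the Burde--Dekimpe result, and your simultaneous diagonalization of $B$ and $\prs$ via the $\de$-eigenbasis extracts directly the one-parameter constraint $\prs_{11}=\prs_{22}+\prs_{33}$ that the Ha--Lee classification would otherwise supply. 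What you gain is independence from those references and a transparent link between the anti-derivation spectrum and the metric parameters $(\mu,\nu)$; what the paper gains is brevity.

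Two points deserve care when you fill in the details. First, the combinatorial elimination of spectra $\Lambda$ with $|\Lambda|\neq 3$ or with higher multiplicities is, as you note, the real work; you will need to use not just $[\G,\G]=\G$ but also that $\G$ has no proper ideals, and the argument (while elementary) is a genuine case analysis rather than a one-liner. Second, the assertion that an automorphism of $\mathrm{sl}(2,\R)$ carries the $\de$-eigenbasis to $\mathcal{B}$ is not automatic: the eigenbasis is only $B$-orthogonal, and $\mathrm{Aut}(\mathrm{sl}(2,\R))\cong\mathrm{PGL}(2,\R)$ maps onto $\mathrm{SO}(B)\subsetneq \mathrm{O}(B)$, so you must check that after rescaling the eigenvectors to a $B$-orthonormal frame and using the residual $X_2\leftrightarrow X_3$ swap (and possibly sign changes on the $e_i$), the remaining ambiguity lies in $\mathrm{SO}(B)$. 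This is true but should be stated.
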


\begin{proof} Suppose that $G$ is a simply-connected  simple cyclic Riemannian Lie group. Then its Lie algebra  $\G$ is simple and its Killing form  is nondegenerate and defines a pseudo-Riemannian bi-invariant metric on $G$. By virtue of Proposition \ref{quadratic}, $\G$ has an invertible anti-derivation. According to \cite[Theorem 5.1]{BH}, $\G\otimes \mathbb{C}=\mathrm{sl}(2,\mathbb{C})$ and hence $\G$ is a real form of $\mathrm{sl}(2,\mathbb{C})$ and $\G$ is isomorphic either to $\mathrm{su}(2)$ or $\mathrm{sl}(2,\mathbb{R})$. But $\mathrm{SU}(2)$ cannot be a cyclic Riemannian Lie group by virtue of Proposition \ref{compact}. On the other hand,
	left-invariant Riemannian metrics on  $\widetilde{\mathrm{SL}(2,\R)}$ were classified in \cite[Theorem 3.6]{Lee} and it is an easy task to determine the cyclic ones and their Ricci or scalar curvatures. 
\end{proof}

\begin{theo}\label{semisimple} Let $(G,h)$ be a connected and simply-connected semi-simple cyclic Riemannian Lie group. Then $G$ is isomorphic both as a Lie group and a Riemannian manifold to the Riemannian product  $\widetilde{\mathrm{SL}(2,\R)}\times\ldots\times \widetilde{\mathrm{SL}(2,\R)}$ and each factor $\widetilde{\mathrm{SL}(2,\R)}$ is endowed with one of  the cyclic metrics listed in Theorem \ref{sl}.
\end{theo}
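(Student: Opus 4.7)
The plan is to reduce the semisimple case to the simple case (Theorem \ref{sl}) by showing that the Lie algebra $\G$ decomposes as an orthogonal sum of its simple ideals, each of which inherits a cyclic Riemannian metric.

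First, I would use that $\G$ is semisimple, so its Killing form $\mathbf{k}$ is nondegenerate and bi-invariant. By Proposition \ref{quadratic}, the cyclic condition is equivalent to the existence of a symmetric invertible anti-derivation $\de:\G\too\G$ satisfying $\langle u,v\rangle=\mathbf{k}(\de(u),v)$. Next, write $\G=\g_1\oplus\cdots\oplus \g_k$ as the standard decomposition into simple ideals, which are mutually Killing-orthogonal and satisfy $[\g_i,\g_j]=0$ for $i\neq j$.

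The key step is to prove that $\de$ preserves each $\g_i$. For $X\in\g_i$ and $Y\in\g_j$ with $i\neq j$, the anti-derivation identity gives
\[ 0=\de([X,Y])=-[\de(X),Y]-[X,\de(Y)]. \]
Writing $\de(X)=\sum_\ell (\de X)_\ell$ and $\de(Y)=\sum_\ell (\de Y)_\ell$ in the ideal decomposition, and using $[\g_i,\g_j]=0$ for $i\neq j$, the equation becomes
\[ [(\de X)_j,Y]+[X,(\de Y)_i]=0, \]
with the first term in $\g_j$ and the second in $\g_i$, forcing both to vanish. Varying $Y\in\g_j$ shows $(\de X)_j\in Z(\g_j)=\{0\}$, since $\g_j$ is simple and nonabelian. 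Hence $\de(\g_i)\subset \g_i$. Consequently, for $X\in\g_i$ and $Y\in\g_j$ with $i\neq j$, $\langle X,Y\rangle=\mathbf{k}(\de X,Y)=0$, so the decomposition $\G=\g_1\oplus\cdots\oplus\g_k$ is orthogonal with respect to $\prs$, and each $\g_i$ carries the restriction of the cyclic scalar product, which is itself cyclic.

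To conclude, since $G$ is connected, simply-connected and semisimple, it is isomorphic as a Lie group to the direct product of the simply-connected simple Lie groups $G_1,\ldots,G_k$ with Lie algebras $\g_i$, and the $\prs$-orthogonality of the ideals upgrades this to an isomorphism of Riemannian manifolds. Each $(G_i,h_i)$ is a connected simply-connected simple cyclic Riemannian Lie group, so Theorem \ref{sl} applies and forces $G_i\cong\widetilde{\mathrm{SL}(2,\R)}$ with $h_i$ one of the metrics listed there. The main technical obstacle is the preservation of the ideals by $\de$, which I handle above via the anti-derivation property and the absence of center in simple ideals.
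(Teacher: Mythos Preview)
Your proof is correct and follows the same strategy as the paper: use Proposition~\ref{quadratic} to obtain the anti-derivation $\de$, show that $\de$ preserves each simple ideal, deduce $\prs$-orthogonality of the ideals from their Killing-orthogonality, and then apply Theorem~\ref{sl} to each factor. The only minor difference is in the justification of $\de$-invariance: the paper argues directly from perfectness (for an ideal $I$ with $I=[I,I]$, the identity $\de([u,v])=-[\de(u),v]-[u,\de(v)]\in I$ gives $\de(I)\subset I$), whereas you reach the same conclusion via centerlessness of the simple factors; both arguments are valid and essentially equivalent.
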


\begin{proof} Denote by $B$ the Killing form of $\G$ and $\de$ the symmetric isomorphism given by $\langle u,v\rangle=B(\de(u),v)$. According to Proposition \ref{quadratic}, $\de$ is an  anti-derivation, i.e., for any $u,v\in\G$,
	\[ \de([u,v])=-[\de(u),v]-[u,\de(v)]. \]
	Let $I$ be a semi-simple ideal of $\G$. Since $I=[I,I]$ the relation above shows that $\de(I)=I$. Let $\G=\G_1\oplus\ldots\oplus \G_r$ be the decomposition of $\G$ as simple ideals. For any $i\in\{1,\ldots,r\}$, the restriction of $\prs$ to $\G_i$ is cyclic and, by virtue of Theorem \ref{sl}, $\G_i$ is isomorphic to $\mathrm{sl}(2,\R)$. On the other hand, 
	for any $i\not=j$, $u,v\in \G_i$, $w\in \G_j$. We have
	\[ B([u,v],w)=B(u,[v,w])=0 \] and since $[\G_i,\G_i]=\G_i$ this shows that $B(\G_i,\G_j)=0$ and hence $\langle \G_i,\G_j\rangle=B(\de(\G_i),\G_j)=0$.  This completes the proof.
\end{proof}

\begin{theo}\label{product}  Let $(G,h)$ be a connected and simply-connected  cyclic Riemannian Lie group. Then $G$ is isomorphic both as a Lie group and a Riemannian manifold to the product 
	$G_1\times\widetilde{\mathrm{SL}(2,\R)}\times\ldots\times \widetilde{\mathrm{SL}(2,\R)}$ where $G_1$ is solvable and each factor $\widetilde{\mathrm{SL}(2,\R)}$ is endowed with one of  the cyclic metrics listed in Theorem \ref{sl}.
\end{theo}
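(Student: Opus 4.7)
The plan is to construct an orthogonal Levi-type decomposition $\G = R \oplus S$ as a direct sum of ideals, where $R$ is the radical and $S$ is a semi-simple complement, and then invoke Theorem \ref{semisimple} on the factor $S$. First I would take $R$ to be the radical of $\G$ and set $S := R^{\perp}$; since $\prs$ is positive definite, $R \cap S = 0$ and $\G = R \oplus S$ as vector spaces. To see that $S$ is a subalgebra, pick $v, w \in S$ and $u \in R$ and apply the cyclic identity \eqref{cyclic} to $(v, w, u)$: the terms $\langle [u,v], w\rangle$ and $\langle [w,u], v\rangle$ vanish because $R$ is an ideal and $v, w \perp R$, leaving $\langle [v,w], u\rangle = 0$, so $[v,w] \in R^\perp = S$. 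Since the projection $\G \to \G/R$ restricts to a Lie algebra isomorphism $S \to \G/R$, the subalgebra $S$ is semi-simple and serves as a Levi complement.

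The key step is to upgrade this semi-direct decomposition to a direct one, i.e.\ to show $[R, S] = 0$. Fix $v \in S$; since $R$ is an ideal, $\ad_v$ preserves $R$. Applying \eqref{cyclic} to $(u_1, u_2, v)$ with $u_1, u_2 \in R$ makes the term $\langle [u_1,u_2], v\rangle$ drop out (as $[u_1,u_2]\in R\perp v$), and what remains rearranges to
\[
\langle \ad_v(u_1), u_2\rangle = \langle u_1, \ad_v(u_2)\rangle,
\]
so $\ad_v|_R$ is symmetric. On the other hand, since $S$ is semi-simple one has $S = [S,S]$, so $v = \sum_i [v_i, v_i']$ for some $v_i, v_i' \in S$, and therefore $\ad_v|_R = \sum_i [\ad_{v_i}|_R, \ad_{v_i'}|_R]$ is a sum of commutators of symmetric endomorphisms, hence skew-symmetric. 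An operator that is both symmetric and skew-symmetric vanishes, giving $\ad_v|_R = 0$ and thus $[R,S]=0$.

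I expect this symmetric-versus-skew dichotomy to be the main obstacle, since everything before and after it is essentially bookkeeping. Once $\G = R \oplus S$ as a direct sum of ideals is in hand, the simple-connectedness of $G$ provides a Lie group isomorphism $G \cong G_1 \times S^*$ where $\mathrm{Lie}(G_1) = R$ is solvable and $\mathrm{Lie}(S^*) = S$ is semi-simple. Because $R$ and $S$ are mutually orthogonal ideals, the left-invariant metric $h$ splits at the identity as an orthogonal sum, and this decomposition propagates by left translation, so $h$ is the Riemannian product of its restrictions to the two factors. The restriction to $S^*$ is again cyclic (subalgebras inherit the cyclic identity), so Theorem \ref{semisimple} identifies $S^*$ with a product of copies of $\widetilde{\mathrm{SL}(2,\R)}$, each endowed with one of the metrics of Theorem \ref{sl}, yielding the claimed product structure.
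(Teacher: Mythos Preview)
Your proof is correct and follows essentially the same route as the paper: both take $S=\mathrm{Rad}(\G)^\perp$, use the cyclic identity to show that $S$ is a subalgebra and that each $\ad_v|_R$ with $v\in S$ is symmetric, and then exploit $S=[S,S]$ together with the fact that a commutator of symmetric operators is skew-symmetric to force $[R,S]=0$, concluding via Theorem \ref{semisimple}. The only difference is packaging: the paper first observes $\ad_{[x,y]}|_R=0$ for $x,y\in S$ and then invokes $[S,S]=S$, whereas you write a general $v\in S$ as a sum of brackets; the content is identical.
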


\begin{proof} We denote by $\mathrm{Rad}(\G)$ the radical of $\G$ (the largest solvable ideal).   From the relation \eqref{cyclic}, one can deduce easily that $\mathrm{Rad}(\G)^\perp$ is a Lie subalgebra and for any $x\in \mathrm{Rad}(\G)^\perp$, the restriction of $\ad_x$ to $\mathrm{Rad}(\G)$ is symmetric. Now, for any $x,y\in \mathrm{Rad}^\perp$, $\ad_{[x,y]}=[\ad_x,\ad_y]$ and hence the restriction of $\ad_{[x,y]}$ to $\mathrm{Rad}(\G)$ is both symmetric and skew-symmetric and it vanishes. Since $\G=\mathrm{Rad}(\G)\oplus \mathrm{Rad}(\G)^\perp$,  $\mathrm{Rad}(\G)^\perp$ is semi-simple and $[\mathrm{Rad}(\G)^\perp,\mathrm{Rad}(\G)^\perp]=
	\mathrm{Rad}(\G)^\perp$ and hence $[\mathrm{Rad}(\G)^\perp,\mathrm{Rad}(\G)]=0$ thus $\mathrm{Rad}(\G)^\perp$ is a semi-simple ideal and we can conclude by using Theorem \ref{semisimple}. 
\end{proof}

Theorems \ref{semisimple} and \ref{product} reduce the determination of connected and simply-connected cyclic Riemannian Lie groups to the determination of the solvable ones. Thanks to the following result, the determination of connected and simply-connected cyclic Riemannian Lie groups is complete. Before stating our result, let us introduce the model of solvable cyclic Riemannian Lie group. 

Let $p,q\in\mathbb{N}^*$ and $\Om=(\om_{ij})_{1\leq i\leq q}^{1\leq j\leq p}$ a real $(q,p)$-matrix such that $\Om^t$ has rank $q$.  We denote by $G(q,p,\Om)$ the Lie group whose underline manifold is  $\R^q\times\R^p$ and the group product is given by 
\[ (s,t).(s',t')=(s+s',t+(e^{\langle s,\Om_1\rangle_0}t'_1,\ldots, e^{\langle s,\Om_p\rangle_0}t'_p)), \]where $(\Om_1,\ldots,\Om_p)$ are the columns of $\Om$ and $\prs_0$ is the canonical Euclidean product of $\R^q$.  Let $(f_1,\ldots,f_p)$ and $(h_1,\ldots,h_q)$ be the canonical bases of $\R^p$ and $\R^q$, respectively. We have
\[ h_i^\ell=\frac{\partial}{\partial s_i}\esp
f_i^\ell=e^{\langle s,\Om_i\rangle_0}\frac{\partial}{\partial t_i},
\]where $u^\ell$ is the left invariant vector field associated to the vector $u\in\R^q\times\R^p$.

We have obviously that the non vanishing brackets are
\begin{equation}\label{solvable} [h_i^\ell,f_j^\ell]=\om_{ij}f_j^\ell. \end{equation}
Let $h_0$ be
the left-invariant metric for which $(h_1^\ell,\ldots,h_q^\ell,f_1^\ell,\ldots,f_p^\ell)$ is orthonormal. Then
\begin{equation*}
h_0=\sum_{i=1}^qds_i^2+\sum_{i=1}^pe^{-2\langle s,\Om_i\rangle_0}dt_i^2.
\end{equation*}
By using \eqref{solvable}, it is easy to check that $(G(q,p,\Om),h_0)$ is a cyclic Riemannian Lie group.

\begin{pr} $(G(q,p,\Om),h_0)$ is isometric to $(G(r,s,\Theta),\widetilde{h}_0)$ if and only if $q=r$,  $p=s$ and there exists $Q$ orthogonal and $P$ a permutation such that $\Om=Q\Theta P$.
	
\end{pr}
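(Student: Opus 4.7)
The plan is to treat the two directions separately. The ``if'' direction is a direct construction: given $\Om = Q\Theta P$ with $Q\in \mathrm{O}(q)$ and $P$ the permutation matrix of some $\sigma\in S_p$, define a linear map $\Phi:\G(q,p,\Theta)\too\G(q,p,\Om)$ on the orthonormal Lie-algebra bases by
\[\Phi(\widetilde{h}_i^\ell)=\sum_{k=1}^q Q_{ki}\,h_k^\ell,\qquad \Phi(\widetilde{f}_j^\ell)=f_{\sigma^{-1}(j)}^\ell.\]
The bracket relations \eqref{solvable} reduce preservation of brackets under $\Phi$ to the scalar identity $\theta_{ij}=\sum_k Q_{ki}\om_{k,\sigma^{-1}(j)}$, i.e.\ to $\Theta = Q^t\Om P^{-1}$, which is exactly the hypothesis. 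Since $Q$ is orthogonal and $\sigma$ permutes an orthonormal family, $\Phi$ preserves the inner product; as both groups are connected and simply-connected, $\Phi$ integrates to a Lie group isomorphism which is simultaneously a Riemannian isometry.

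For the ``only if'' direction, the first step is to reduce an isometry between the two Riemannian Lie groups to a Lie-algebra isomorphism $\Phi$ preserving the scalar product: compose with a left translation so that identity goes to identity, and invoke the rigidity provided by the canonical decomposition described in Proposition \ref{pr3}. Next comes a dimension count: by Proposition \ref{pr3}$(iii)$ one has $[\G,\G]\cap Z(\G)=\{0\}$, and for the model $\G(q,p,\Om)$ the subspaces are $[\G,\G]=\mathrm{span}\{f_j^\ell:\Om_j\neq 0\}$ and $Z(\G)=\mathrm{span}\{f_j^\ell:\Om_j=0\}$; hence $p=\dim[\G,\G]+\dim Z(\G)$ is an invariant of $\Phi$ and therefore $p=s$, whence $q=r$.

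Next, $\mathrm{span}(h_i^\ell)$ is canonically the orthogonal complement of $Z(\G)$ inside $[\G,\G]^\perp$ (by Proposition \ref{pr3}$(ii)$), so $\Phi$ sends $\mathrm{span}(\widetilde{h}_i^\ell)$ isometrically onto $\mathrm{span}(h_i^\ell)$ via some $Q\in\mathrm{O}(q)$, and $\mathrm{span}(\widetilde{f}_j^\ell)$ isometrically onto $\mathrm{span}(f_j^\ell)$ via some $M\in\mathrm{O}(p)$. Expanding the bracket-preservation identity $[\Phi(\widetilde{h}_i^\ell),\Phi(\widetilde{f}_j^\ell)]=\theta_{ij}\Phi(\widetilde{f}_j^\ell)$ and comparing coefficients of $f_k^\ell$ gives: whenever $M_{kj}\neq 0$, the $k$-th column of $Q^t\Om$ equals the $j$-th column $\Theta_j$. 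Since $M$ is invertible, this yields an equality of the multisets of columns of $Q^t\Om$ and of $\Theta$; choosing any permutation $\pi\in S_p$ with $(Q^t\Om)_{\pi(j)}=\Theta_j$ and letting $P$ be its permutation matrix gives $\Om=Q\Theta P$.

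The main obstacle lies in this last step: when some columns of $\Om$ coincide, the joint eigenspaces of the commuting family $\{\ad_{h_i^\ell}\}$ have dimension strictly greater than one, so $M$ need not itself be a permutation matrix and the pair $(Q,P)$ is not uniquely determined. The plan circumvents this by extracting only the \emph{existence} of a permutation matching columns, which is exactly what the statement asserts.
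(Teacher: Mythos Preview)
Your proposal follows the same route as the paper: pass to a metric-preserving Lie-algebra isomorphism, use $[\G,\G]$ and $[\G,\G]^\perp$ to separate the $f$-basis from the $h$-basis, and read off $\Om=Q\Theta P$ from bracket preservation together with the joint-eigenvector property of the $f_j$ under the family $\{\ad_{h_i}\}$. You are in fact more careful than the paper on two points: your dimension count $p=\dim[\G,\G]+\dim Z(\G)$ correctly accommodates possible zero columns of $\Om$ (the paper's line ``$A([\G_1,\G_1])=[\G_2,\G_2]$ hence $p=s$'' tacitly assumes no column vanishes), and your multiset-of-columns argument correctly handles repeated columns, whereas the paper asserts $A(f_i)=f'_{\sigma(i)}$ outright, which is only justified when every joint eigenspace is one-dimensional.

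One caveat on your side: the reduction from an arbitrary Riemannian isometry to a Lie-algebra isomorphism via ``compose with a left translation \ldots and invoke the rigidity provided by Proposition \ref{pr3}'' is not something that proposition supplies; an isometry fixing the identity between two Lie groups with left-invariant metrics need not be a group homomorphism. The paper sidesteps this entirely by starting from a metric-preserving automorphism $A:\G_1\to\G_2$, reading ``isometric'' in the statement as ``isomorphic both as a Lie group and a Riemannian manifold,'' which is the convention used throughout the paper (cf.\ the phrasing in Theorems \ref{main}, \ref{semisimple}, \ref{product}). With that interpretation your argument goes through without this step.
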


\begin{proof} Denote by $\G_1$ and $\G_2$ the Lie algebras of $G(q,p,\Om)$ and $G(r,s,\Theta)$, respectively. Then there exists orthonormal bases $(f_1,\ldots,f_p)$, $(f_1',\ldots,f_s')$ of $[\G_1,\G_1]$ and $[\G_2,\G_2]$, respectively and 
	orthonormal bases $(h_1,\ldots,h_q)$, $(h_1',\ldots,h_r')$ of $[\G_1,\G_1]^\perp$ and $[\G_2,\G_2]^\perp$, respectively such that
	\[ [h_i,f_j]=\om_{ij}f_j\esp [h_i',f_j']=\theta_{ij}f_j'. \]
	Suppose that there exists an automorphism $A:\G_1\too\G_2$ which preserves the metrics. Then $A([\G_1,\G_1])=[\G_2,\G_2]$ and $A([\G_1,\G_1]^\perp)=[\G_2,\G_2]^\perp$ and hence $p=s$ and $q=r$. For any $u\in [\G_1,\G_1]$,
	\[ [A(u),A(f_i)]=A[u,f_i]=\left(\sum_{j=1}^q u_j\om_{ji}\right)A(f_i). \]
	So $A(f_i)$ is a commune eigenvector  for $\ad_v$ for any $v\in[\G_2,\G_2]$ and hence $A(f_i)=f_{\sigma(i)}'$. Put $A(h_i)=\sum_{j=1}^qq_{ji}h_j'$. Then
	\begin{align*}
	A[h_i,f_j]&=\om_{ij}A(f_j)=\om_{ij}f_{\sigma(j)}'\\
	&=[A(h_i),f_{\sigma(j)}']\\
	&=\sum_{l=1}^qq_{li}\theta_{l\sigma(j)}f_{\sigma(j)}'.
	\end{align*}
	So $\om_{ij}=(Q^t\Theta)_{i\sigma(j)}=(Q^t\Theta P)_{ij}$ where $P$ is the permutation associated to $\sigma$.	Since $A$ is an isometry then $Q$ is an orthogonal matrix.
\end{proof}

\begin{remark} Suppose $p=q$ and $\Om$ invertible. Then $G(q,p,\Om)$ is isomorphic to $G(q,p,DP)$ where $\Om=OS$ is a the polar decomposition,  $ S=PDP^t$ and $D$ is diagonal with positive entries.

\end{remark}

\begin{theo}\label{pr5} Let $(G,h)$ be a connected and simply connected solvable cyclic Riemannian non abelian  Lie group. Then $(G,h)$ is isometric to $(G(q,p,\Om)\times\R^r,h_0\oplus\prs_0)$ where $\Om^t$ has rank $q$  and $p+q+r=\dim G$.
\end{theo}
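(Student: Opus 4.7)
The plan is to combine the algebraic constraints already established on cyclic Lie algebras with a simultaneous diagonalization of a commuting family of symmetric endomorphisms, and then read off the model $G(q,p,\Om) \times \R^r$ from the resulting normal form.

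Set $\mathfrak{n} = [\G,\G]$ and $\mathfrak{a} = \mathfrak{n}^\perp$. By Proposition~\ref{pr3}(ii), $\mathfrak{a}$ is an abelian subalgebra and $\ad_h$ is symmetric with respect to $\prs$ for every $h \in \mathfrak{a}$; by Proposition~\ref{pr8}, $\G$ is 2-solvable, so $\mathfrak{n}$ is abelian; and by Proposition~\ref{pr3}(iii), $Z(\G) \subset \mathfrak{a}$. The family $\{\ad_h|_{\mathfrak{n}}\}_{h \in \mathfrak{a}}$ then consists of pairwise commuting (because $\mathfrak{a}$ is abelian) symmetric operators on $\mathfrak{n}$, so they can be simultaneously diagonalized in an orthonormal basis $(f_1, \ldots, f_N)$ of $\mathfrak{n}$, yielding $[h, f_j] = \lambda_j(h) f_j$ for some linear forms $\lambda_j$ on $\mathfrak{a}$. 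A vanishing weight $\lambda_j$ would make $f_j$ central, hence $f_j \in \mathfrak{a} \cap \mathfrak{n} = 0$; thus every $\lambda_j$ is nonzero.

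Next, decompose $\mathfrak{a} = \mathfrak{z} \oplus \mathfrak{a}'$ orthogonally with $\mathfrak{z} = \{h \in \mathfrak{a} : \ad_h = 0\}$ (which coincides with $Z(\G)$), and put $r = \dim \mathfrak{z}$, $q = \dim \mathfrak{a}'$, $p = \dim \mathfrak{n}$. Choosing an orthonormal basis $(h_1, \ldots, h_q)$ of $\mathfrak{a}'$ and setting $\om_{ij} = \lambda_j(h_i)$ produces the brackets $[h_i, f_j] = \om_{ij} f_j$, while $[h_i, h_k] = [f_j, f_k] = 0$ and $\mathfrak{z}$ is central. The condition $\mathrm{rank}(\Om^t) = q$ amounts to the injectivity of the map $\mathfrak{a}' \to \R^p$, $h \mapsto (\lambda_j(h))_j$, whose kernel sits in $\mathfrak{z} \cap \mathfrak{a}' = 0$. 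Matching $(h_1, \ldots, h_q, f_1, \ldots, f_p)$ with the left-invariant orthonormal frame of $G(q,p,\Om)$ described before~\eqref{solvable} and sending any orthonormal basis of $\mathfrak{z}$ to the standard basis of $\R^r$ then produces a metric Lie algebra isomorphism $\G \cong \mathrm{Lie}(G(q,p,\Om)) \oplus \R^r$, which integrates to an isometry of simply-connected Riemannian Lie groups.

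The main delicacy is verifying that no weight $\lambda_j$ vanishes and that $\mathrm{rank}(\Om^t) = q$, so that the triple $(q, p, \Om)$ actually falls into the admissible range defining $G(q,p,\Om)$; both of these points reduce to the inclusion $Z(\G) \subset [\G,\G]^\perp$ supplied by Proposition~\ref{pr3}(iii), together with the fact that the non-abelian hypothesis forces $\mathfrak{n} \neq 0$ and therefore (after the central reduction) $p, q \geq 1$.
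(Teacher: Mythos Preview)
Your argument is correct and follows essentially the same route as the paper's proof: split $\G=[\G,\G]\oplus\h\oplus Z(\G)$ using Propositions~\ref{pr3} and~\ref{pr8}, simultaneously diagonalize the commuting symmetric family $\{\ad_h\}_{h\in[\G,\G]^\perp}$ on the abelian ideal $[\G,\G]$, and read off the model from the resulting brackets, with the rank condition on $\Om^t$ coming from $\h\cap Z(\G)=0$. Your write-up is in fact slightly more explicit than the paper's on two points (the nonvanishing of the weights $\lambda_j$ and the verification that $p,q\geq1$), but the underlying strategy is identical.
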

\begin{proof} According to Proposition \ref{pr3}, $[\G,\G]^\perp$ is abelian, $Z(\G)\subset[\G,\G]^\perp$ and since $[\G,\G]$ is nilpotent  it is also abelian by virtue of Proposition \ref{pr8}. Moreover, according to Proposition \ref{pr3}, for any $u\in[\G,\G]^\perp$, $\ad_u$ is symmetric. Thus $\G=[\G,\G]\oplus\h\oplus Z(\G)$ where $\h\oplus Z(\G)=[\G,\G]^\perp$ and $\langle \h,Z(\G)\rangle=0$.
	Since $\h$ is abelian, we can diagonalize simultaneously $\ad_u$ for any $u\in[\G,\G]^\perp$ and hence there exists an orthonormal basis $(f_1,\ldots,f_p)$ of $[\G,\G]$,  an orthonormal basis $(h_1,\ldots,h_q)$ of $\h$ and $\Om=(\om_{ij})_{1\leq i\leq q}^{1\leq j\leq p}$ such that the non vanishing brackets are
	\[ [h_i,f_j]=\om_{ij}f_j. \]
	But $\h\cap Z(\G)=\{0\}$ and hence the lines of $\Om$ are linearly independent which means $\Om^t$ has rank $q$.
	According to \eqref{solvable}, $[\G,\G]\oplus\h$ is isomorphic to the Lie algebra of $G(q,p,\Om)$ which completes the proof.
\end{proof}

Finally, the combination of Theorems \ref{product} and \ref{pr5} proves Theorem \ref{main}.

\section{Curvatures of cyclic Riemannian Lie groups}\label{section3}

In this section, we give some important properties of the different curvatures of cyclic Riemannian Lie groups

\begin{theo}\label{scalar} Let $(G,h)$ be a cyclic Riemannian non-abelian Lie group. Then its scalar curvature is negative. In particular, neither the sectional curvature or the Ricci curvature can be positive.
	
\end{theo}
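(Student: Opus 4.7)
The plan is to reduce to the factors in Theorem \ref{main} and exploit additivity of scalar curvature on Riemannian products. Since $(G,h)$ is isometric to $\R^r\times(G(q,p,\Om),h_0)\times\widetilde{\mathrm{SL}(2,\R)}^m$, one has $\sigma(G)=\sigma(G(q,p,\Om))+m\,\sigma_{\widetilde{\mathrm{SL}(2,\R)}}$, as $\R^r$ is flat. The semisimple factors are handled directly by Theorem \ref{sl}: $\sigma_{\widetilde{\mathrm{SL}(2,\R)}}=-8(\mu^2+\mu\nu+\nu^2)/((\mu+\nu)\mu\nu)<0$ since $\mu,\nu>0$.

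The main technical step is to show $\sigma(G(q,p,\Om))\leq 0$, with strict inequality when $\Om\neq 0$. For this I would use Proposition \ref{pr3}$(i)$, namely $\langle X\star Y,Z\rangle=-\langle X,[Y,Z]\rangle$, together with the brackets $[h_i,f_j]=\om_{ij}f_j$ of \eqref{solvable}, to record the Levi-Civita products in the orthonormal basis $(h_i,f_j)$: $\Li_{h_i}=0$, $f_j\star h_i=-\om_{ij}f_j$, and $f_j\star f_k=\de_{jk}\sum_i\om_{ij}h_i$. Plugging these into $\mathrm{K}(X,Y)=\Li_{[X,Y]}-[\Li_X,\Li_Y]$ and applying $\ric(X,X)=\sum_\al\langle \mathrm{K}(X,e_\al)X,e_\al\rangle$, a direct calculation gives
\[
\ric(h_i,h_i)=-\sum_j\om_{ij}^2,\qquad \ric(f_j,f_j)=-\sum_l\om_{lj}s_l,
\]
where $s_l:=\sum_k\om_{lk}$. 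Summing over all basis vectors yields $\sigma(G(q,p,\Om))=-\|\Om\|^2-\|s\|^2$, which is $\leq 0$ with equality iff $\Om=0$.

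Finally, since $G$ is non-abelian, the classification forces either $q\geq 1$ (whence $\Om\neq 0$ because $\Om^t$ has rank $q$) or $m\geq 1$; in either case at least one factor contributes strictly negative scalar curvature, so $\sigma(G)<0$. The ``in particular'' clause is then immediate: positive-definite Ricci would give $\sigma=\tr(\ric)>0$, contradicting the above, while pointwise positive sectional curvature forces positive Ricci and the same contradiction. The only real obstacle is careful sign bookkeeping in the Ricci computation of the solvable factor, but this is routine.
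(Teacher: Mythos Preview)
Your proof is correct and follows the same overall strategy as the paper: reduce via Theorem~\ref{main} to the factors, use Theorem~\ref{sl} for the $\widetilde{\mathrm{SL}(2,\R)}$ pieces, and handle the solvable piece separately. The only difference is in this last step. The paper's proof simply invokes Milnor's well-known result that \emph{any} left-invariant Riemannian metric on a non-abelian solvable Lie group has negative scalar curvature, which dispatches the factor $G(q,p,\Om)$ in one line. You instead compute the Ricci and scalar curvatures of $(G(q,p,\Om),h_0)$ explicitly, obtaining $\sigma=-\|\Om\|^2-\|s\|^2$; this is exactly the content of the paper's later Proposition~\ref{pr5b} (written there as $\sigma=-\sum_i(|L_i|^2+\tr(L_i)^2)$). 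So your argument is more self-contained but longer, while the paper's is a clean appeal to a classical fact; both are perfectly sound.
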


\begin{proof} 
	It is well-known that any Riemannian metric on a non abelian solvable Lie group has  negative scalar curvature (see \cite{milnor}) and, according to Proposition \ref{sl}, any cyclic Riemannian metric on $\mathrm{sl}(2,\R)$ has 	
	a negative scalar curvature and the result follows from Theorem \ref{main}.	
\end{proof}

A Lie group $(G,h)$ is called vectorial (see \cite{gadea}) if there exists $\bold{h}\in\G$ such that, for any $u,v\in\G$, \begin{equation}\label{milnor}
[u,v]=\langle u,\bold{h}\rangle v-\langle v,\bold{h}\rangle u.
\end{equation}
These Riemannian Lie groups appeared first in \cite{milnor}.

\begin{pr}\label{pr1} Let $(G,h)$ be a vectorial Riemannian Lie group.  
	Then $(G,h)$ is cyclic and its sectional curvature is constant, i.e., for any $u,v\in\G$,
	\[ \mathrm{K}(u,v):=\Li_{[u,v]}-[\Li_u,\Li_v]=
	-\langle\bold{h},\bold{h}\rangle u\wedge v, \]
	where $u\wedge v(w)=\langle u,w\rangle v-\langle v,w\rangle u$. Moreover, the Riemannian universal covering of  $(G,h)$ is isometric to $G(1,n-1,\Om)$ where $n=\dim\ G$ and $\Om=(|\bold{h}|^2,\ldots,|\bold{h}|^2)$.
	
\end{pr}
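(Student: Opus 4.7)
The statement contains three claims: cyclicity, a constant-curvature formula, and a structural identification of the universal cover. I would verify them in this order.

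For cyclicity, substitute the vectorial bracket \eqref{milnor} into $\langle[u,v],w\rangle+\langle[v,w],u\rangle+\langle[w,u],v\rangle$. Each cyclic piece produces two scalar products of the shape $\langle\cdot,\bold{h}\rangle\langle\cdot,\cdot\rangle$, and by symmetry of the inner product the six resulting terms cancel pairwise. Having established \eqref{cyclic}, I invoke Proposition \ref{pr3}(i): $\langle u\star X,Y\rangle=-\langle u,[X,Y]\rangle$. Inserting \eqref{milnor} yields $\Li_u X=\langle X,u\rangle\bold{h}-\langle X,\bold{h}\rangle u$, that is, $\Li_u=u\wedge\bold{h}$.

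For the curvature, I expand $\mathrm{K}(u,v)=\Li_{[u,v]}-[\Li_u,\Li_v]$. The first piece is immediate: $\Li_{[u,v]}=[u,v]\wedge\bold{h}=\langle u,\bold{h}\rangle\,v\wedge\bold{h}-\langle v,\bold{h}\rangle\,u\wedge\bold{h}$. For the commutator I evaluate both compositions $(u\wedge\bold{h})(v\wedge\bold{h})$ and $(v\wedge\bold{h})(u\wedge\bold{h})$ on an arbitrary $X\in\G$; each produces four rank-one terms in the three privileged directions $u,v,\bold{h}$, and subtraction collapses them to
\[
[\Li_u,\Li_v]=\langle u,\bold{h}\rangle\,v\wedge\bold{h}-\langle v,\bold{h}\rangle\,u\wedge\bold{h}+|\bold{h}|^2\,u\wedge v.
\]
The two $\bold{h}$-wedge contributions cancel against $\Li_{[u,v]}$, leaving $\mathrm{K}(u,v)=-|\bold{h}|^2\,u\wedge v$. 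This commutator expansion is the main computational hurdle, though the cancellations are entirely mechanical once the eight terms are laid out.

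For the identification with $G(1,n-1,\Om)$, assume $\bold{h}\neq 0$ (else $\G$ is abelian and the statement is vacuous). Then \eqref{milnor} forces $[\G,\G]=\bold{h}^\perp$ and $[\G,\G]^\perp=\R\bold{h}$, so $\G$ fits Theorem \ref{pr5} with $q=1$ and $p=n-1$. Taking $h_1=\bold{h}/|\bold{h}|$ together with any orthonormal basis $(f_1,\ldots,f_{n-1})$ of $\bold{h}^\perp$, \eqref{milnor} yields $[h_1,f_j]=|\bold{h}|\,f_j$ for every $j$, which matches the model brackets \eqref{solvable} with $\om_{1j}$ independent of $j$. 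Theorem \ref{pr5} then delivers the claimed isometry between the Riemannian universal cover of $(G,h)$ and $G(1,n-1,\Om)$ with this constant row $\Om$.
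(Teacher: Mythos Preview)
Your argument is correct and follows essentially the same path as the paper: verify cyclicity directly from \eqref{milnor}, read off the Levi-Civita product, compute the curvature, and then identify the Lie algebra with the model \eqref{solvable}. Your curvature step is organized a bit more cleanly than the paper's: you observe once and for all that $\Li_u=u\wedge\bold{h}$ and work at the operator level, whereas the paper expands $\mathrm{K}(u,v)w$ term by term from $u\star v=\langle u,v\rangle\bold{h}-\langle\bold{h},v\rangle u$ and simplifies twelve scalar terms; both computations are of course equivalent.

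One point to flag: by normalizing $h_1=\bold{h}/|\bold{h}|$ you obtain $[h_1,f_j]=|\bold{h}|\,f_j$, so your $\Om$ has constant entry $|\bold{h}|$, not $|\bold{h}|^2$ as written in the statement. The paper's proof computes $[\bold{h},u]=|\bold{h}|^2u$ with the \emph{unnormalized} $\bold{h}$ and reads off $|\bold{h}|^2$ from that; since the model metric $h_0$ is defined so that the basis is orthonormal, your normalized version is the one that actually yields an isometry. This is a harmless discrepancy in the statement rather than a flaw in your argument.
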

\begin{proof}A straightforward computation using \eqref{milnor}  shows that $(G,h)$ is cyclic and
	the Levi-Civita product is given by
	\[ u\star v=\langle u,v\rangle \bold{h}-\langle \bold{h},v\rangle u,\quad u,v\in\G.\]
	Not that $\Li_{\bold{h}}=0$.
	Let us compute the curvature $\mathrm{K}(u,v)=\Li_{[u,v]}-[\Li_u,\Li_v]$.
	\begin{align*}
	\mathrm{K}(u,v)w&=(u\star v)\star w-(v\star u)\star w
	-u\star(v\star w)+v\star(u\star w)\\
	&=-\langle v,\bold{h}\rangle u\star w +\langle u,\bold{h}\rangle v\star w
	-\langle v,w\rangle u\star \bold{h}+\langle \bold{h},w\rangle u\star v
	\\&+\langle u,w\rangle v\star \bold{h}-\langle \bold{h},w\rangle v\star u
	\\
	&=-\langle v,\bold{h}\rangle \langle u,w\rangle \bold{h}+ \langle v,\bold{h}\rangle
	\langle \bold{h},w\rangle u
	+\langle u,\bold{h}\rangle \langle v,w\rangle \bold{h}- \langle u,\bold{h}\rangle
	\langle \bold{h},w\rangle v\\&
	-\langle v,w\rangle	\langle u,\bold{h}\rangle \bold{h}+\langle v,w\rangle\langle \bold{h},\bold{h}\rangle u
	+\langle \bold{h},w\rangle\langle u,v\rangle \bold{h}-\langle \bold{h},w\rangle\langle \bold{h},v\rangle u
	\\&+\langle u,w\rangle\langle v,\bold{h}\rangle \bold{h}-\langle u,w\rangle\langle \bold{h},\bold{h}\rangle v
	-\langle \bold{h},w\rangle\langle u,v\rangle \bold{h}+\langle \bold{h},w\rangle\langle \bold{h},u\rangle v\\
	&=|\bold{h}|^2\left(\langle v,w\rangle u-\langle u,w\rangle v\right)\\
	&=-\langle\bold{h},\bold{h}\rangle u\wedge v(w).
	\end{align*}On the other hand, $\G=\bold{h}^\perp\oplus \R \bold{h}$, $\bold{h}^\perp$ is abelian and for any $u\in \bold{h}^\perp$  $[\bold{h},u]=|\bold{h}|^2 u$.  By using \eqref{solvable}, we can see that the Lie algebra of $G$ is isomorphic to $G(1,n-1,\Om)$ where $\Om=(|\bold{h}|^2,\ldots,|\bold{h}|^2)$. This completes the proof.
\end{proof}

Conversely, we have the following result.

\begin{theo}\label{constantc}
	Let $(G,h)$ be a connected and simply-connected cyclic Riemannian Lie group  of  constant sectional curvature, i.e., there exists $k\in\R$ such that
	\begin{equation}\label{constant} \mathrm{K}(u,v):=\Li_{[u,v]}-[\Li_u,\Li_v]= k u\wedge v. \end{equation}
	Then $k\leq 0$. If $k=0$ then $G$ is abelian and if $k<0$ then $(G,h)$ is vectorial and hence it  is isometric to $(G(1,n-1,\Om),h_0)$ where $n=\dim\ G$ and $\Om=(\la,\ldots,\la)$ with $\la>0$.
\end{theo}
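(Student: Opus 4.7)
The plan is to split the argument into three stages: fixing the sign of $k$, dispatching the flat case, and reducing the case $k<0$ to the vectorial model. For the sign, a constant sectional curvature $k$ forces the scalar curvature to equal $n(n-1)k$, where $n=\dim G$; Theorem~\ref{scalar} (scalar curvature strictly negative for non-abelian cyclic, zero otherwise) immediately gives $k\le 0$, and in the borderline case $k=0$ rules out a non-abelian $G$, so $G$ must be abelian.

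For $k<0$, I would invoke Theorem~\ref{main} to write $G$ as a Riemannian product $\R^r\times G(q,p,\Om)\times\widetilde{\mathrm{SL}(2,\R)}^m$. In such a product, planes spanned by one vector from each factor have zero sectional curvature, so $k<0$ allows at most one non-trivial factor. An $\R^r$ factor is flat and contradicts $k<0$; a $\widetilde{\mathrm{SL}(2,\R)}$ factor is excluded by Theorem~\ref{sl}, since in dimension three constant sectional curvature is equivalent to $\ric=2k\,\mathrm{Id}$, whereas the possible cyclic Ricci tensors on $\mathrm{sl}(2,\R)$ have eigenvalues $8,-8,-8$. Hence $G=G(q,p,\Om)$ with $q,p\ge 1$.

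The core step is to constrain $\Om$ from constancy of sectional curvature. Using Proposition~\ref{pr3}(i) and the orthonormal basis $(h_1,\dots,h_q,f_1,\dots,f_p)$ with $[h_i,f_j]=\om_{ij}f_j$, I would compute the Levi-Civita product and find $h_i\star X=0$ for every $X\in\G$, $f_j\star h_i=-\om_{ij}f_j$, and $f_j\star f_k=\de_{jk}\sum_i\om_{ij}h_i$. Consequently $\Li_{h_i}=0$, so $\mathrm{K}(h_i,h_j)=0$; comparing with $k(h_i\wedge h_j)$ and using $k<0$ yields $q=1$. Writing $h:=h_1$ and $\om_j:=\om_{1j}$, a direct computation gives $\mathrm{K}(h,f_j)f_j=\om_j^2\,h$ and $\mathrm{K}(f_i,f_j)f_j=\om_i\om_j\,f_i$ for $i\ne j$; matched against $-k\,h$ and $-k\,f_i$ respectively these produce $\om_j^2=-k$ and $\om_i\om_j=-k$, forcing all $\om_j$ to share a common sign. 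After possibly swapping $h\mapsto -h$, one obtains $\om_1=\dots=\om_p=\la$ with $\la=\sqrt{-k}>0$.

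Finally, setting $\bold{h}:=\la h$, the identity $[u,v]=\langle u,\bold{h}\rangle v-\langle v,\bold{h}\rangle u$ holds on basis pairs (trivially on $f$-pairs, and on $(h,f_j)$ it reads $\la f_j=[h,f_j]$), so $(G,h)$ is vectorial and Proposition~\ref{pr1} completes the identification with $G(1,n-1,\Om)$. The main obstacle is the curvature bookkeeping that forces $q=1$ and equal $\om_j$; once the Levi-Civita product is explicit via Proposition~\ref{pr3}(i), everything reduces to three scalar identities read off from three families of $2$-planes in the chosen basis.
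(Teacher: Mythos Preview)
Your argument is correct, but it follows a genuinely different route from the paper's. The paper does \emph{not} invoke the classification Theorem~\ref{main}. Instead, after using Theorem~\ref{scalar} to get $k\le 0$ (as you do), it argues directly: negative sectional curvature forces $\G$ solvable (Milnor), so $N_\ell(\G)=[\G,\G]^\perp\neq\{0\}$; picking any nonzero $a\in N_\ell(\G)$, the identity $\Li_{[a,b]}=k\,a\wedge b$ (from $\Li_a=0$) shows $\ker\ad_a=\R a$, hence $\ad_a$ (symmetric by Proposition~\ref{pr3}) has nonzero eigenvalues $\la_2,\dots,\la_n$ on $a^\perp$. The relations $\la_i^2=|k||a|^2$ and $\la_i\la_j=|k||a|^2$ are then extracted from $\langle e_i\star e_i,a\rangle$ and from $[\Li_{e_i},\Li_{e_j}]=-k\,e_i\wedge e_j$, giving all $\la_i$ equal. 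Your approach instead reduces first to the model $G(q,p,\Om)$ via Theorem~\ref{main} and then reads off $q=1$ and equality of the $\om_j$ from the explicit curvature formulas; this is shorter once the structure theorem is available. The paper's approach, by contrast, is logically independent of Theorem~\ref{main} and, as noted in the remark following the theorem, actually proves more: the conclusion holds for any Riemannian Lie group of constant curvature admitting a nonzero $a$ with $\Li_a=0$, without assuming the cyclic condition globally. Your route cannot recover that generalization.
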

\begin{proof} According to Theorem \ref{scalar}, $k\leq 0$ and if $k=0$ then $G$ is abelian. Suppose now that $k<0$.
	Since the sectional curvature is negative then $\G$ is solvable (see \cite[Theorem 1.6]{milnor}) and hence, according to Proposition \ref{pr3}, $[\G,\G]^\perp=N_\ell(\G)\not=\{0\}$. Let $a\in N_\ell(\G)$ with $a\not=\{0\}$. 
	We have $\ker\ad_a=\R a$. Indeed, since $\Li_a=0$ then, according to \eqref{constant}, for any $b\in\G$,
	\[ \Li_{[a,b]}=k a\wedge b. \] 
	So if $[a,b]=0$ then $a\wedge b=0$ and hence $a$ and $b$ are colinear. Moreover, according to Proposition \ref{pr3}, $\ad_a$ is symmetric and hence $\ad_a$ leaves $a^\perp$ invariant and its restriction is invertible. Then there exists an orthonormal basis $(e_2,\ldots,e_n)$,  $(\la_2,\ldots,\la_n)$ such that $\la_i\not=0$ for any $i$ and
	\[ [a,e_i]=\la_i e_i. \]
	For any $i\in\{2,\ldots,n\}$,
	\[ e_i\star e_i=\frac1{\la_i}\Li_{[a,e_i]}e_i=-\frac{k}{\la_i}a. \]
	On the other hand, since $e_i\star a=-[a,e_i]$, from the relation
	\[ \langle e_i\star e_i,a\rangle=-\langle e_i,e_i\star a\rangle \]we deduce that
	\[ \la_i^2=|k||a|^2. \]
	For $i\not=j$, by using \eqref{constant}, we have
	\[ [e_i,e_j]=e_i\star e_j-e_j\star e_i=\frac{k}{\la_i}a\wedge e_i(e_j)-\frac{k}{\la_j}a\wedge e_j(e_i)=0 \]
	and
	\[ [\Li_{e_i},\Li_{e_j}]=-ke_i\wedge e_j. \]
	Thus
	\[ \frac{k^2}{\la_i\la_j}[a\wedge  e_i,a\wedge  e_j]=-ke_i\wedge e_j. \]
	But for any skew-symmetric endomorphism, we have
	\[ [A,u\wedge v]=Au\wedge v+u\wedge Av. \]
	We deduce that
	\[  \frac{k^2}{\la_i\la_j}\langle a,a\rangle e_i\wedge e_j=-ke_i\wedge e_j.  \]
	So, for any $i\not=j$,
	\[ \la_i\la_j=|k|| a|^2. \]
	In conclusion, $\la_2=\ldots=\la_n=\pm |a|\sqrt{|k|}$. If we take $\bold{h}=\pm \frac{\sqrt{|k|}}{|a|} a$, we get the desired result by applying Proposition \ref{pr1}.
\end{proof}

\begin{remark} One can see that this result is still valid if we drop the hypothesis cyclic and suppose only that there exists $a\in\G$ such that $\Li_a=0$.
	
\end{remark}

\begin{theo}\label{negative} Let $(G,h)$ be a connected and simply-connected  cyclic Riemannian Lie group. Then the sectional curvature is negative if and only  $(G,h)$ is isometric to $(G(1,n-1,\Om),h_0)$ where $\Om=(\la_1,\ldots,\la_{n-1})$ and the $\la_i$ are non null and have the same sign.
\end{theo}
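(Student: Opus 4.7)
The plan is to split the argument into the two implications and, via Theorem \ref{main}, reduce everything to the explicit model $(G(q,p,\Om),h_0)$. I adopt the convention that for orthonormal $u,v$ the sectional curvature is $K(u,v)=-\langle\mathrm{K}(u,v)v,u\rangle$, matching the identification used in Proposition \ref{pr1}.

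For the \emph{only if} direction, assume $(G,h)$ has strictly negative sectional curvature. By Theorem \ref{main}, $(G,h)$ is a Riemannian product of an abelian factor $\R^r$, a solvable factor $(G(q,p,\Om),h_0)$, and $m$ copies of $\widetilde{\mathrm{SL}(2,\R)}$. In a Riemannian product, a $2$-plane spanned by vectors tangent to different factors has vanishing sectional curvature, so at most one factor can be non-trivial; the abelian factor is flat, and $\widetilde{\mathrm{SL}(2,\R)}$ has Ricci tensor $8\,\mathrm{Diag}(1,-1,-1)$ by Theorem \ref{sl}, hence a direction of strictly positive Ricci curvature, which is incompatible with negative sectional curvature. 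Therefore $(G,h)=(G(q,p,\Om),h_0)$. Using the orthonormal basis $(h_1,\ldots,h_q,f_1,\ldots,f_p)$ from the proof of Theorem \ref{pr5}, Proposition \ref{pr3} yields $h_i\in N_\ell(\G)$ and $[\G,\G]^\perp$ abelian, so $\mathrm{K}(h_i,h_j)=\Li_{[h_i,h_j]}-[\Li_{h_i},\Li_{h_j}]=0$ for $i\ne j$, forcing $q=1$. Writing $\Om=(\la_1,\ldots,\la_p)$, a short computation with the brackets \eqref{solvable} and the Levi-Civita product of Proposition \ref{pr3}$(i)$ gives $K(h_1,f_i)=-\la_i^2$ and $K(f_i,f_j)=-\la_i\la_j$ for $i\ne j$, and strict negativity of these imposes that every $\la_i$ is non-zero and all $\la_i$ share the same sign.

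For the \emph{if} direction, I would first derive the Levi-Civita product $h_1\star h_1=h_1\star f_i=0$, $f_i\star h_1=-\la_i f_i$, $f_i\star f_j=\la_i\delta_{ij}h_1$ directly from Proposition \ref{pr3}$(i)$ and \eqref{solvable}. Let $D$ be the symmetric endomorphism of $[\G,\G]$ with $Df_i=\la_i f_i$; up to flipping the sign of $h_1$ we may assume $D$ is positive definite. Decomposing two orthonormal vectors as $u=\alpha h_1+x$ and $v=\beta h_1+y$ with $x,y\in[\G,\G]$, and expanding $\mathrm{K}(u,v)v=\Li_{[u,v]}v-[\Li_u,\Li_v]v$ with the products above, one obtains
\[ \langle\mathrm{K}(u,v)v,u\rangle=\|D(\alpha y-\beta x)\|^2+\bigl(\langle Dx,x\rangle\langle Dy,y\rangle-\langle Dx,y\rangle^2\bigr). \]
The main obstacle is to show that this quantity is strictly positive on every $2$-plane. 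The Cauchy--Schwarz inequality applied to the positive definite inner product $\langle D\cdot,\cdot\rangle$ gives $\langle Dx,y\rangle^2\leq\langle Dx,x\rangle\langle Dy,y\rangle$, with equality only when $x$ and $y$ are colinear. If they are linearly independent, the inequality is strict; if $y=cx$ or $y=0$, the linear independence of $u,v$ forces $\alpha y-\beta x\ne 0$, and the first term is strictly positive. In either case $K(u,v)<0$, completing the proof.
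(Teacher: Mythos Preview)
Your proof is correct, and both directions take a genuinely different route from the paper. For the \emph{only if} implication, the paper invokes Milnor's theorem that negative sectional curvature forces solvability, then appeals to Heintze's structural result \cite{Heintz} to conclude that $[\G,\G]$ has codimension one and that the symmetric part of $\ad_{u_0}$ on $[\G,\G]$ is positive definite; combined with Proposition~\ref{pr3} this gives $q=1$ and the sign condition on the $\la_i$ directly. Your argument instead leans on the classification Theorem~\ref{main}: you rule out the abelian and $\widetilde{\mathrm{SL}(2,\R)}$ factors by exhibiting flat or positively curved planes, then force $q=1$ by observing that $\mathrm{K}(h_i,h_j)=0$, and finally read off the sign condition from the basis $2$-planes $K(f_i,f_j)=-\la_i\la_j$. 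Your approach is more self-contained (no external reference to Heintze), at the cost of depending on the full classification.

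For the \emph{if} implication, the paper simply lists the curvature components on the orthonormal basis and asserts that ``this shows that the sectional curvature is negative''; implicitly one is meant to notice that the curvature tensor is diagonal in the sense that $R_{abcd}\neq 0$ only when $\{a,b\}=\{c,d\}$, so that $\langle\mathrm{K}(u,v)v,u\rangle=\sum_{a<b}R_{abba}(u_av_b-u_bv_a)^2$ with all $R_{abba}>0$. Your treatment makes this explicit and packages it neatly via the positive definite operator $D$ and the Cauchy--Schwarz inequality for $\langle D\cdot,\cdot\rangle$, together with a clean case split handling colinear $x,y$. This is a genuine expository improvement over the paper's terse conclusion.
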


\begin{proof} Suppose that the sectional curvature is negative then $\G$ is solvable (see \cite[Theorem 1.6]{milnor}) and, by virtue of 
	Theorem \ref{pr5}, $[\G,\G]$ is abelian. Moreover, according to \cite[Proposition 2]{Heintz}, $[\G,\G]$ has codimension one and there exists $u_0\in[\G,\G]^\perp$ such that the symmetric part of the restriction $\ad_{u_0}$ to $[\G,\G]$ is positive definite. But, according to Proposition \ref{pr3}, $\ad_{u_0}$ is symmetric. We deduce that for any $u\in[\G,\G]^\perp\setminus\{0\}$, the restriction of $\ad_u$ to $[\G,\G]$ is invertible symmetric and all its eigenvalues have the same sign.
	
	Conversely, suppose that $[\G,\G]$ is abelian
	and, for any $u\in[\G,\G]^\perp$, the restriction of $\ad_u$ to $[\G,\G]$ is invertible symmetric and all its eigenvalues have the same sign. Take a unitary vector $\bold{h}\in[\G,\G]^\perp$. Then there exists an orthonormal basis $(f_2,\ldots,f_n)$ and $(\la_2,\ldots,\la_n)$ non null and having the same sign such that, for any $i\in\{2,\ldots,n\}$,
	\[ [\bold{h},f_i]=\la_i f_i. \]
	Let $\star$ be the Levi-Civita product. Then the non vanishing products are
	\[ f_i\star f_i=\la_i \bold{h}\esp f_i\star \bold{h}=-\la_i f_i. \]
	A straightforward computation gives that, for $i,j,k$ different
	\[\begin{cases} \mathrm{K}(f_i,f_j)f_k=\mathrm{K}(f_i,f_j)\bold{h}=\mathrm{K}(f_i,\bold{h})f_j=0, \;\\
	\langle \mathrm{K}(f_i,f_j)f_i,f_j\rangle=-\la_i\la_j,\\ \langle\mathrm{K}(f_i,\bold{h})f_i,\bold{h}\rangle
	=-\la_i^2. \end{cases}\]
	This shows that the sectional curvature is negative and completes the proof.
\end{proof}

The expression of the Ricci curvature of a cyclic Riemannian Lie groups is rather simple.

\begin{pr}\label{ricci} Let $(G,h)$ be a cyclic Riemannian Lie group. Then its Ricci curvature is given, for any $u,v\in\G$, by
	\[ \ric(u,v)=-B(u,v)-\langle [H,u],v\rangle, \]where $H$ is defined by the relation $\langle H,u\rangle =\tr(\ad_u)$ and $B$ is the Killing form given by $B(u,v)=\tr(\ad_u\circ\ad_v)$. In particular, for any $u\in[\G,\G]^\perp$, $\ric(u,u)\leq 0$ and $\ric(u,u)=0$ if and only if $u\in Z(\G)$.
\end{pr}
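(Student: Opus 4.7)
The plan is to expand $\ric(X,X)$ in an orthonormal basis and then polarise. Fix an orthonormal basis $(e_1,\ldots,e_n)$ of $\G$ and decompose
$$\ric(X,X)=\sum_i\langle\mathrm{K}(X,e_i)X,e_i\rangle=T_1+T_2+T_3,$$
with $T_1=\sum_i\langle\Li_{[X,e_i]}X,e_i\rangle$, $T_2=-\sum_i\langle\Li_X\Li_{e_i}X,e_i\rangle$ and $T_3=\sum_i\langle\Li_{e_i}\Li_XX,e_i\rangle$. The main workhorse will be the identity $\langle A\star B,C\rangle=-\langle A,[B,C]\rangle$ from Proposition \ref{pr3}(i), which converts every $\star$-product into a bracket.

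A single application gives $T_1=-\sum_i|[X,e_i]|^2=-\tr(\ad_X^t\ad_X)$. Since the same identity yields $X\star X=-\ad_X^tX$, applying it again to $T_3$ produces $\sum_i\langle e_i,[\ad_X^tX,e_i]\rangle=\tr(\ad_{\ad_X^tX})=\langle H,\ad_X^tX\rangle=-\langle[H,X],X\rangle$, where I use the trace identity $\sum_i\langle e_i,\ad_Ze_i\rangle=\tr(\ad_Z)=\langle H,Z\rangle$. The main obstacle is $T_2$: two applications of Proposition \ref{pr3}(i) reduce it to $\sum_i\langle X,[e_i,\ad_X^te_i]\rangle$, and to evaluate it I would apply the cyclic relation \eqref{cyclic} to the triple $(e_i,\ad_X^te_i,X)$. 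After the rearrangement, the two surviving pieces are $\sum_i|\ad_X^te_i|^2$ and $-\sum_i\langle\ad_X^2e_i,e_i\rangle$, giving
$$T_2=\tr(\ad_X^t\ad_X)-B(X,X).$$
The $\tr(\ad_X^t\ad_X)$ contributions of $T_1$ and $T_2$ cancel, leaving $\ric(X,X)=-B(X,X)-\langle[H,X],X\rangle$.

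To polarise, I first observe that $H\in[\G,\G]^\perp$, because $\langle H,[u,v]\rangle=\tr(\ad_{[u,v]})=\tr([\ad_u,\ad_v])=0$. Proposition \ref{pr3}(ii) then forces $\ad_H=\ad_H^t$, so $(X,Y)\mapsto\langle[H,X],Y\rangle$ is symmetric; polarising the quadratic identity yields $\ric(X,Y)=-B(X,Y)-\langle[H,X],Y\rangle$.

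For the final assertion, let $u\in[\G,\G]^\perp$. Proposition \ref{pr3}(ii) makes $\ad_u$ symmetric, hence $B(u,u)=\tr(\ad_u^2)=\tr(\ad_u\ad_u^t)=\sum_i|\ad_ue_i|^2\geq 0$. Moreover, $\langle[H,u],u\rangle=\langle\ad_uH,u\rangle=\langle H,\ad_u^tu\rangle=\langle H,[u,u]\rangle=0$. Hence $\ric(u,u)=-B(u,u)\leq 0$, with equality exactly when $\ad_u=0$, i.e.\ $u\in Z(\G)$.
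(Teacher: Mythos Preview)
Your proof is correct. The paper takes a shorter but less self-contained route: it quotes the general formula
\[
\ric(u,v)=-\tr(\Ri_u\circ\Ri_v)-\tfrac12\langle[H,u],v\rangle-\tfrac12\langle[H,v],u\rangle
\]
from \cite{bou-Ti} and then substitutes $\Ri_u=-\ad_u^t$ (Proposition~\ref{pr3}(i)) to get $\tr(\Ri_u\circ\Ri_v)=\tr(\ad_u^t\ad_v^t)=\tr(\ad_v\ad_u)=B(u,v)$, together with $\ad_H=\ad_H^t$ to merge the two $H$-terms. You instead expand $\ric(X,X)$ directly from the definition in an orthonormal basis and use the identity $\langle A\star B,C\rangle=-\langle A,[B,C]\rangle$ repeatedly, invoking the cyclic relation~\eqref{cyclic} only once to crack $T_2$; this is more hands-on but entirely elementary and avoids the external reference. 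One small remark on the last paragraph: your chain $\langle[H,u],u\rangle=\langle\ad_uH,u\rangle$ has a sign slip ($[H,u]=-\ad_uH$), though it is harmless since the result is $0$; in fact you could bypass the computation altogether by noting that both $H$ and $u$ lie in the abelian subalgebra $[\G,\G]^\perp$ (Proposition~\ref{pr3}(ii)), so $[H,u]=0$ outright.
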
 
\begin{proof} It is well-known (see \cite[pp. 4]{bou-Ti}) that the Ricci curvature is given by
	\[ \ric(u,v)=-\tr(\Ri_u\circ\Ri_v)-\frac12\langle [H,u],v\rangle-\frac12\langle [H,v],u\rangle. \]
	According to Proposition \ref{pr3}, $\Ri_u=-\ad_u^t$ and, since $H\in[\G,\G]^\perp$, $\ad_H=\ad_H^t$. The last assertions is a consequence of the fact that, for any $u\in [\G,\G]^\perp$, $\ad_u=\ad_u^t$ which completes the proof.
\end{proof}

Let us compute now the Levi-Civita product, the Ricci curvature and the curvature  of $(G(q,p,\Om),h_0)$.

\begin{pr}\label{pr5b} We denote by $(h_1,\ldots,h_q,f_1,\ldots,f_p)$ the canonical basis of $\R^q\times\R^p$. In the following, we give only the non vanishing quantities.
	\begin{enumerate}
		\item The  Levi-Civita products of $(G(q,p,\Om),h_0)$ are given by
		\[ f_i\star f_i=\sum_{k=1}^q\om_{ki}h_k\esp f_i\star h_j=-\om_{ji}f_i,\quad 1\leq i\leq p, 1\leq j\leq q. \]
		\item The  curvatures of $(G(q,p,\Om),h_0)$ are given by
		\[ \begin{cases}\di\mathrm{K}(f_i,f_j)f_i=-\langle \Om_i, \Om_j\rangle_0 f_j,\;i\not=j\\
		\di	\mathrm{K}(f_i,h_j)f_i=-\om_{ji}\sum_{k=1}^q\om_{ki}h_k,\;\\
		\mathrm{K}(f_i,h_j)h_k=\om_{ji}\om_{ki}f_i. \end{cases}\]

		\item  The Ricci curvature of $(G(q,p,\Om),h_0)$ is given by 
		\[ \ric(h_i,h_j)=-\langle L_i,L_j\rangle_0\esp \ric(f_j,f_j)=-\sum_{l=1}^p\langle\Om_l,\Om_j\rangle_0. \]\item The scalar curvature of $(G(q,p,\Om),h_0)$ is given by
		\[ \sigma=-\sum_{i=1}^q(|L_i|^2+\tr(L_i)^2). \]
		\item The derivative of the Ricci curvature is given by
		\[ \na_{f_i}(\ric)(f_i,h_k)=\sum_{l=1}^p
		(\om_{kl}-\om_{ki})\langle\Om_l,\Om_i\rangle_0. \]
		\item The derivative of the curvature is given by
		\[\begin{cases}\di \na_{f_i}(\mathrm{K})(f_i,f_k,f_k)=\langle \Om_i,\Om_k\rangle_0 \sum_{m=1}^{q}( \om_{mi}-\om_{mk}) h_m,\\
		\na_{f_i}(\mathrm{K})(f_j,h_k,f_i)=(\om_{ki}-\om_{kj}) \langle \Om_j,\Om_i\rangle_0 f_j.
		\end{cases} \]
	\end{enumerate}
	$(L_1,\ldots,L_q)$ are the lines of $\Om$, $(\Om_1,\ldots,\Om_p)$ its columns and $\tr((x_1,\ldots,x_p))=\sum_{i=1}^px_i$.
	
\end{pr}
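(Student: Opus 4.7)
The entire proposition is a direct computation on the orthonormal basis $(h_1,\ldots,h_q,f_1,\ldots,f_p)$, starting from the brackets $[h_i,f_j]=\om_{ij}f_j$. My plan proceeds item by item, exploiting the simplified Koszul formula available for cyclic metrics.

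\textbf{Levi-Civita products.} By Proposition \ref{pr3}(i), $\langle X\star Y,Z\rangle=-\langle X,[Y,Z]\rangle$, so every product is read off from a single bracket. Pairing $f_i\star f_i$ with each basis vector gives the $h_k$-components $-\langle f_i,[f_i,h_k]\rangle=\om_{ki}$ and vanishing $f_j$-components. Pairing $f_i\star h_j$ against $f_k$ gives $-\om_{jk}\delta_{ik}$, which yields $f_i\star h_j=-\om_{ji}f_i$. All remaining products vanish because $\h$ and $[\G,\G]$ are both abelian and brackets land in $[\G,\G]$; consistency is checked from $[h_j,f_i]=h_j\star f_i-f_i\star h_j$.

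\textbf{Curvatures.} Using $\mathrm{K}(X,Y)Z=[X,Y]\star Z-X\star(Y\star Z)+Y\star(X\star Z)$ together with Part 1, each case reduces to a three-line computation. For instance, for $i\ne j$, $[f_i,f_j]=0$ and $f_j\star f_i=0$, so $\mathrm{K}(f_i,f_j)f_i=f_j\star(f_i\star f_i)=\sum_k\om_{ki}\,f_j\star h_k=-\big(\sum_k\om_{ki}\om_{kj}\big)f_j=-\langle\Om_i,\Om_j\rangle_0 f_j$. The two remaining formulas are analogous.

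\textbf{Ricci and scalar curvature.} I apply Proposition \ref{ricci}: $\ric(u,v)=-B(u,v)-\langle[H,u],v\rangle$. The Killing form on basis elements is computed from $\ad_{h_i}f_k=\om_{ik}f_k$, giving $B(h_i,h_j)=\sum_k\om_{ik}\om_{jk}=\langle L_i,L_j\rangle_0$ and $B(f_j,f_j)=0$. Since $\tr(\ad_{f_j})=0$ and $\tr(\ad_{h_i})=\tr(L_i)$, we have $H=\sum_i\tr(L_i)h_i\in[\G,\G]^\perp$, so $[H,h_i]=0$ and $[H,f_j]=\big(\sum_i\tr(L_i)\om_{ij}\big)f_j=\big(\sum_l\langle\Om_l,\Om_j\rangle_0\big)f_j$ after recognising $\sum_i\tr(L_i)\om_{ij}=\sum_l\sum_i\om_{il}\om_{ij}=\sum_l\langle\Om_l,\Om_j\rangle_0$. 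This yields the two Ricci formulas, and summing along the basis together with the identity $\sum_j\sum_l\langle\Om_l,\Om_j\rangle_0=\sum_i\tr(L_i)^2$ gives the scalar curvature.

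\textbf{Derivatives.} Since $\ric$ and $\mathrm{K}$ are constant when evaluated on left-invariant fields, the covariant derivative reduces at the Lie-algebra level to $\na_X T(Y_1,\ldots,Y_k)=-\sum_i T(Y_1,\ldots,X\star Y_i,\ldots,Y_k)$. For the Ricci derivative I substitute the products $f_i\star f_i=\sum_m\om_{mi}h_m$ and $f_i\star h_k=-\om_{ki}f_i$ from Part 1, then use the Ricci values from Part 3 and the identity $\sum_m\om_{mi}\langle L_m,L_k\rangle_0=\sum_l\om_{kl}\langle\Om_l,\Om_i\rangle_0$ to collapse everything to $\sum_l(\om_{kl}-\om_{ki})\langle\Om_l,\Om_i\rangle_0$. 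For the curvature derivative the same scheme is applied, using the curvature formulas from Part 2; the $h$-direction contributions cancel in pairs and the remaining terms combine into the stated expressions. The only ``obstacle'' is bookkeeping: keeping track of which index runs over $[\G,\G]$ vs.\ $[\G,\G]^\perp$, and systematically recognising the row/column inner products of $\Om$ when simplifying the resulting double sums.
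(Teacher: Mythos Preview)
Your proposal is correct and follows exactly the same route as the paper: derive the Levi-Civita products from Proposition~\ref{pr3}(i) together with the bracket formulas~\eqref{solvable}, compute $H=\sum_i\tr(L_i)h_i$ and plug into Proposition~\ref{ricci} for the Ricci curvature, and obtain the remaining items by direct calculation. Your write-up is in fact considerably more detailed than the paper's proof, which dispatches everything in three sentences and leaves the curvature, scalar curvature and covariant-derivative computations entirely to the reader.
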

\begin{proof} The expressions of the Levi-Civita product follow immediately from Proposition \ref{pr3} $(i)$ and the expressions of the Lie bracket given by \ref{solvable}.
	The mean curvature vector is given by $H=\sum_{i=1}^q\tr(L_i)h_i$ and  the Ricci curvature can be deduced easily by using its expression given in Proposition \ref{ricci}. The expressions of the curvature and the derivative of the Ricci curvature follows immediately. 
\end{proof}
\begin{theo}Let $(G,h)$ be a connected and simply-connected  cyclic Riemannian Lie group.   Then $(G,h)$ has  negative Ricci curvature if and only if $(G,h)$ is isometric to $(G(q,p,\Om),h_0)$, $\Om^t$ has rank $q$ and, for any $j=1,\ldots,p$,
	\[ \sum_{l=1}^p\langle\Om_l,\Om_j\rangle_0>0. \]
\end{theo}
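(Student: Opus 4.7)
The plan is to prove both directions by combining the structure theorem (Theorem \ref{main}) with the explicit Ricci computation of Proposition \ref{pr5b}, exploiting the fact that the Ricci tensor of a Riemannian product is the orthogonal direct sum of the Ricci tensors of the factors.

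For the forward direction, assume that the Ricci curvature of $(G,h)$ is negative definite. First I would apply Theorem \ref{main} to write $(G,h)$ isometrically as $\R^r\times(G(q,p,\Om),h_0)\times\widetilde{\mathrm{SL}(2,\R)}^m$. Since Ricci is additive on Riemannian products, the restriction of $\ric$ to each factor must itself be negative definite. The $\R^r$ factor is flat, hence has identically zero Ricci, which forces $r=0$. Each $\widetilde{\mathrm{SL}(2,\R)}$ factor has $\ric=8\,\mathrm{Diag}(1,-1,-1)$ by Theorem \ref{sl}, which has a positive eigenvalue; this forces $m=0$. Hence $(G,h)\cong(G(q,p,\Om),h_0)$, and the rank condition on $\Om^t$ is part of the definition of this model. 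Plugging into the formula $\ric(f_j,f_j)=-\sum_{l=1}^p\langle\Om_l,\Om_j\rangle_0$ from Proposition \ref{pr5b} and using negative definiteness yields precisely $\sum_{l=1}^p\langle\Om_l,\Om_j\rangle_0>0$ for every $j$.

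For the converse, suppose $(G,h)=(G(q,p,\Om),h_0)$ with $\Om^t$ of rank $q$ and the inequality satisfied. The key point is that in the orthonormal basis $(h_1,\ldots,h_q,f_1,\ldots,f_p)$ the Ricci tensor is block-diagonal: Proposition \ref{pr5b} lists only the non-vanishing Ricci components, so the mixed terms $\ric(h_i,f_j)$ and the off-diagonal terms $\ric(f_i,f_j)$ for $i\ne j$ all vanish. The $\mathfrak h$-block is the Gram matrix $-\Om\Om^t=(-\langle L_i,L_j\rangle_0)_{ij}$, which is negative definite precisely when $\Om$ has rank $q$, i.e.\ when $\Om^t$ has rank $q$, as assumed. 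The $[\G,\G]$-block is diagonal with entries $-\sum_{l=1}^p\langle\Om_l,\Om_j\rangle_0$, which are strictly negative by hypothesis. Hence $\ric$ is negative definite.

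The only step that requires a small verification is the vanishing of the off-diagonal Ricci components; this is the routine computation that Proposition \ref{pr5b} implicitly carries out (it only lists the non-vanishing terms). Using $\ric(u,v)=-B(u,v)-\langle[H,u],v\rangle$ from Proposition \ref{ricci}, with $H=\sum_i\tr(L_i)h_i\in\mathfrak h$ and the brackets $[h_i,f_j]=\om_{ij}f_j$, one checks directly that $B(f_i,f_j)=B(h_i,f_j)=0$ for $i\ne j$ and that $\langle[H,\cdot],\cdot\rangle$ is supported on the diagonal of the $f$-block, which is the only genuinely technical point in the argument.
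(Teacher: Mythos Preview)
Your proposal is correct and follows essentially the same approach as the paper: use the structure theorem to decompose $(G,h)$ into an abelian factor, a solvable model $G(q,p,\Om)$, and copies of $\widetilde{\mathrm{SL}(2,\R)}$, then eliminate the non-solvable pieces via their Ricci signatures and read off the inequality from Proposition \ref{pr5b}. In fact your write-up is more complete than the paper's own proof, which only records the elimination of the $\widetilde{\mathrm{SL}(2,\R)}$ factors and leaves the exclusion of the flat $\R^r$ factor, the derivation of the inequality on the $\Om_j$, and the entire converse direction implicit.
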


\begin{proof} According to Theorems \ref{product}, \ref{semisimple} and \ref{sl}, $G$ is the Riemannian product a  cyclic solvable group and $r$-copies of $\widetilde{\mathrm{SL}(2,\R)}$. But, by virtue of Theorem \ref{sl}, a cyclic metric on  $\widetilde{\mathrm{SL}(2,\R)}$ has always a positive Ricci direction. This completes the proof.
\end{proof}

\begin{theo}\label{einstein} Let $(G,h)$ be a connected and simply-connected  cyclic Riemannian Lie group.   Then $(G,h)$ is $\la$-Einstein   if and only if $(G,h)$ is isometric to $(G(q,p,\Om),h_0)$, $(L_1,\ldots,L_q)$ form an orthogonal family and, for any $i=1,\ldots,q$ and for any $j=1,\ldots,p$,
	\[ \langle L_i,L_i\rangle=-\la\esp  \sum_{l=1}^p\langle\Om_l,\Om_j\rangle_0=-\la. \]
\end{theo}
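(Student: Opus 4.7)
The plan is to combine the classification Theorem~\ref{main} with the explicit Ricci formulas given in Proposition~\ref{pr5b}. Since the Ricci tensor of a Riemannian product splits as the direct sum of the Ricci tensors of the factors, the Einstein condition $\ric(u,v)=\la\langle u,v\rangle$ forces each factor in the decomposition
\[ G\cong\R^r\times G(q,p,\Om)\times\underbrace{\widetilde{\mathrm{SL}(2,\R)}\times\ldots\times\widetilde{\mathrm{SL}(2,\R)}}_{m\text{ copies}} \]
to be $\la$-Einstein with the same $\la$.

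First I would eliminate the semi-simple factors. By Theorem~\ref{sl}, in the basis $\mathcal{B}$ of $\mathrm{sl}(2,\R)$ any cyclic left-invariant metric on $\widetilde{\mathrm{SL}(2,\R)}$ has Ricci matrix $8\,\mathrm{Diag}(1,-1,-1)$ while the metric has matrix $\mathrm{Diag}(\mu+\nu,\mu,\nu)$; comparing the three diagonal entries forces $\la(\mu+\nu)=8$ and $\la\mu=\la\nu=-8$, which is inconsistent, so $m=0$. Next I would rule out a non-trivial flat factor: on $\R^r$ the Ricci vanishes, so $r>0$ would force $\la=0$, and then the scalar curvature formula of Proposition~\ref{pr5b} would give $\sum_i(|L_i|^2+\tr(L_i)^2)=0$; this forces every $L_i=0$, and combined with the rank hypothesis on $\Om^t$ it collapses the solvable factor to an abelian one. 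So in the non-trivial case $G\cong G(q,p,\Om)$.

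Finally I would read off the Einstein condition in the orthonormal basis $(h_1,\ldots,h_q,f_1,\ldots,f_p)$. Proposition~\ref{pr5b} already supplies
\[ \ric(h_i,h_j)=-\langle L_i,L_j\rangle_0,\qquad \ric(f_j,f_j)=-\sum_{l=1}^p\langle\Om_l,\Om_j\rangle_0, \]
and the remaining mixed entries $\ric(h_i,f_j)$ and the off-diagonal $\ric(f_i,f_j)$ for $i\ne j$ vanish by a short check using Proposition~\ref{ricci}: the mean curvature vector $H=\sum_k\tr(L_k)h_k$ lies in the $h$-block and the Killing form $B$ only pairs $h$'s with $h$'s, which is immediate from $[h_i,f_j]=\om_{ij}f_j$ together with the abelianness of $[\g,\g]$. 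Equating this Ricci tensor with $\la$ times the identity in the orthonormal basis then yields exactly $\langle L_i,L_j\rangle_0=-\la\,\de_{ij}$ (orthogonality of the $L_i$ with common squared norm $-\la$) and $\sum_{l=1}^p\langle\Om_l,\Om_j\rangle_0=-\la$ for every $j$; the converse direction is immediate by reading the same formulas backwards. The only mildly delicate point is verifying the vanishing of $\ric(f_i,f_j)$ for $i\ne j$, which is not written out in Proposition~\ref{pr5b} but is a short Killing-form computation; everything else is bookkeeping on top of the classification.
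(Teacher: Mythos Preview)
The paper states Theorem~\ref{einstein} without an explicit proof, so there is nothing to compare line by line; the intended argument is clearly the one implicit in the surrounding results, namely the classification Theorem~\ref{main} together with the Ricci data of Theorem~\ref{sl} and Proposition~\ref{pr5b}. Your proposal follows exactly this route and is correct: the elimination of the $\widetilde{\mathrm{SL}(2,\R)}$ factors via the incompatibility $\la(\mu+\nu)=8$, $\la\mu=\la\nu=-8$ (forcing $\mu=\nu$ against $\mu>\nu$), the elimination of a flat Euclidean factor via $\la=0\Rightarrow|L_i|=0$ against the rank condition, and the reading of the Einstein equations from the explicit Ricci entries are all sound. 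Your remark that the vanishing of $\ric(h_i,f_j)$ and of $\ric(f_i,f_j)$ for $i\neq j$ needs a short separate check (via Proposition~\ref{ricci}) is well taken and the verification you sketch is accurate.
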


\begin{theo}\label{riccip} Let $(G,h)$ be a connected and simply-connected  cyclic Riemannian Lie group.   Then $(G,h)$ is Ricci parallel   if and only if $(G,h)$ is isometric to $(\R^r\times G(q,p,\Om),\prs_0\oplus h_0)$ and, for any $j\in\{1,\ldots,q\}\}$, $i\in\{1,\ldots,p\}$
	\[ \sum_{l=1}^p
	(\om_{jl}-\om_{ji})\langle\Om_l,\Om_i\rangle_0=0. \]
\end{theo}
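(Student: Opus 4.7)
The plan is to combine the classification of Theorem \ref{main} with the explicit Ricci and curvature computations of Proposition \ref{pr5b}. Since $(G,h)$ is isometric to $\R^r\times (G(q,p,\Om),h_0)\times \widetilde{\mathrm{SL}(2,\R)}^m$, and the covariant derivative of a left-invariant Ricci tensor on a Riemannian product is block-diagonal, Ricci parallelism of $(G,h)$ is equivalent to Ricci parallelism of each factor. The Euclidean factor $\R^r$ is flat and thus trivially Ricci parallel, so the two remaining tasks are (i) to show that no factor $\widetilde{\mathrm{SL}(2,\R)}$ with a cyclic metric can be Ricci parallel, and (ii) to characterise Ricci parallelism for $G(q,p,\Om)$.

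For (i), the idea is to exploit the explicit description of $\ric$ and $\prs$ on $\mathrm{sl}(2,\R)$ provided by Theorem \ref{sl}. In the basis $\mathcal{B}$, the Ricci bilinear form has matrix $8\,\mathrm{Diag}(1,-1,-1)$ and the metric has matrix $\mathrm{Diag}(\mu+\nu,\mu,\nu)$ with $\mu>\nu>0$, so the Ricci operator has three pairwise distinct eigenvalues of mixed signs. If $\nabla\ric=0$, these eigenspaces would constitute mutually orthogonal parallel line fields on $\widetilde{\mathrm{SL}(2,\R)}$, forcing by the de Rham decomposition theorem a local splitting as a Riemannian product of three one-dimensional flat factors, i.e.\ local flatness, which contradicts the scalar-curvature formula of Theorem \ref{sl}. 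Alternatively, one can use Proposition \ref{pr3}$(i)$ and the identity $(\na_u\ric)(v,w)=-\ric(u\star v,w)-\ric(v,u\star w)$ to exhibit a single non-vanishing component of $\nabla\ric$ directly. In either case, $m=0$.

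For (ii), I would compute the Levi-Civita products on $G(q,p,\Om)$ using the cyclic formula $\langle u\star v,w\rangle=-\langle u,[v,w]\rangle$ of Proposition \ref{pr3}$(i)$, together with the brackets $[h_i,f_j]=\om_{ij}f_j$. Besides the two products listed in Proposition \ref{pr5b}$(1)$, one checks that $h_j\star h_k=0$, $h_j\star f_i=0$ and $f_i\star f_j=0$ for $i\neq j$. Consequently $h_j\in N_\ell(\G)$ for every $j$, which immediately gives $\na_{h_j}\ric=0$. For $u=f_i$, expand $(\na_{f_i}\ric)(v,w)$ for $v,w$ running over $\{h_1,\ldots,h_q,f_1,\ldots,f_p\}$. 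Using the Ricci formulas of Proposition \ref{pr5b}$(3)$ (which give $\ric(f_i,f_j)=0$ for $i\neq j$ and $\ric(h_i,f_j)=0$), a direct case analysis shows that every component vanishes except
\[
(\na_{f_i}\ric)(f_i,h_k)=\sum_{l=1}^p(\om_{kl}-\om_{ki})\langle\Om_l,\Om_i\rangle_0,
\]
matching Proposition \ref{pr5b}$(5)$. The equivalence in the theorem follows.

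The main obstacle is not conceptual but bookkeeping: exhaustively enumerating the components of $\nabla\ric$ to ensure that no off-diagonal contribution survives. In particular, the vanishing of $(\na_{f_i}\ric)(h_j,h_k)$ for $j\neq k$ relies on the cancellation $\ric(f_i,h_l)=0$, and the vanishing of $(\na_{f_i}\ric)(f_j,f_k)$ when $i=j\neq k$ uses that $f_i\star f_j=0$ for $i\neq j$. Once these are confirmed, the theorem reduces cleanly to the single family of scalar conditions displayed above.
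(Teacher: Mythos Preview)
Your proposal is correct and follows essentially the same approach as the paper: invoke the classification of Theorem \ref{main}, rule out the $\widetilde{\mathrm{SL}(2,\R)}$ factors, and then read off the Ricci-parallel condition on $G(q,p,\Om)$ from the formula in Proposition \ref{pr5b}(5). The paper's proof is terser---it simply asserts that the cyclic metrics on $\widetilde{\mathrm{SL}(2,\R)}$ are not Ricci parallel and defers the $G(q,p,\Om)$ analysis to Proposition \ref{pr5b}---whereas you supply the supporting details (the de Rham/eigenspace argument or a direct component check for $\mathrm{sl}(2,\R)$, and the case-by-case verification that \eqref{solvable} forces all components of $\nabla\ric$ on $G(q,p,\Om)$ to vanish except $(\nabla_{f_i}\ric)(f_i,h_k)$).
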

\begin{proof} The cyclic Riemannian metrics on $\widetilde{\mathrm{SL}(2,\R)}$ listed in Theorem \ref{sl} are not Ricci parallel and we can conclude by using Theorem \ref{main} and Proposition \ref{pr5b}.
\end{proof}

Recall that a connected and simply connected Riemannian manifold is symmetric if and only if it is locally symmetric, i.e., its tensor curvature is parallel.
\begin{theo}\label{symmetric} Let $(G,h)$ be a connected and simply-connected  cyclic Riemannian Lie group.   Then $(G,h)$ is symmetric   if and only if $(G,h)$ is isometric to $(\R^r\times G(q,p,\Om),\prs_0\oplus h_0)$ and there exists an orthogonal family $\mathcal{F}$ of $\R^q$ such, for any $i\in\{1,\ldots,p\}$, $\Om_i\in\mathcal{F}$.
\end{theo}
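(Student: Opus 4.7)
The plan is to combine Theorem \ref{main} with the explicit covariant derivatives of the curvature recorded in Proposition \ref{pr5b}. Since $(G,h)$ is simply-connected, being symmetric is equivalent to the condition $\na\mathrm{K}=0$. Any symmetric manifold is Ricci parallel, so Theorem \ref{riccip} immediately rules out the $\widetilde{\mathrm{SL}(2,\R)}$ factors appearing in the decomposition supplied by Theorem \ref{main}, and $(G,h)$ must be isometric to $\R^r\times (G(q,p,\Om),h_0)$. The $\R^r$ factor is flat and hence contributes nothing, so the problem reduces to characterising when $(G(q,p,\Om),h_0)$ itself is locally symmetric.

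For the direct implication, I will specialise the formula
\[ \na_{f_i}(\mathrm{K})(f_i,f_k,f_k)=\langle \Om_i,\Om_k\rangle_0 \sum_{m=1}^{q}(\om_{mi}-\om_{mk})\,h_m \]
to each pair $i\neq k$. Its vanishing forces either $\langle \Om_i,\Om_k\rangle_0=0$ or $\Om_i=\Om_k$. A quick preliminary remark rules out zero columns: a vanishing $\Om_i$ would make the non-zero vector $f_i\in[\G,\G]$ central, contradicting the inclusion $Z(\G)\subset[\G,\G]^\perp$ from Proposition \ref{pr3}. Taking $\mathcal F$ to be the set of distinct columns, the dichotomy above says precisely that $\mathcal F$ is an orthogonal family of non-zero vectors and that every $\Om_i$ belongs to $\mathcal F$.

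For the converse, I will show that the orthogonal family condition forces $(G(q,p,\Om),h_0)$ to split as a Riemannian product of an abelian group and real hyperbolic spaces, which is manifestly symmetric. Concretely, partition $\{1,\ldots,p\}$ into blocks $I_1,\ldots,I_s$ on which $\Om_i$ takes the constant value $v_j\in\mathcal F$, let $W=\mathrm{span}(v_1,\ldots,v_s)\subset\R^q$, and note that $W^\perp\subset [\G,\G]^\perp$ is central since $[h,f_i]=\langle h,\Om_i\rangle_0 f_i=0$ for $h\in W^\perp$ and $[\G,\G]^\perp$ is abelian by Proposition \ref{pr3}. Using the orthonormal basis $\hat v_j=v_j/|v_j|$ of $W$, the subspaces $\G_j:=\R\hat v_j\oplus\mathrm{span}\{f_i:i\in I_j\}$ are pairwise commuting ideals whose associated subgroups are isomorphic to $G(1,|I_j|,(|v_j|,\ldots,|v_j|))$; by Proposition \ref{pr1} these have constant negative sectional curvature and are therefore isometric to real hyperbolic spaces.

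The main obstacle is carrying out this ideal decomposition cleanly and identifying the factors with genuine hyperbolic spaces; once done, the parallelism of $\mathrm{K}$ is immediate from the standard fact that Euclidean and hyperbolic factors are symmetric spaces and that symmetry is preserved under Riemannian products. A subtlety worth flagging is that Proposition \ref{pr5b} records only two specific components of $\na\mathrm{K}$, so the converse must be established via the product structure rather than by brute-force checking that all remaining unlisted components vanish.
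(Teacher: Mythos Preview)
Your argument is correct and in fact more complete than the paper's own proof, which is a two-line sketch: it notes that the $\widetilde{\mathrm{SL}(2,\R)}$ metrics of Theorem~\ref{sl} are not locally symmetric and then simply invokes Theorem~\ref{main} and Proposition~\ref{pr5b}. The forward implication matches the paper's intended route exactly, via the component $\na_{f_i}(\mathrm{K})(f_i,f_k,f_k)$.

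Where you genuinely differ is in the converse. The paper implicitly treats the two displayed components of $\na\mathrm{K}$ in Proposition~\ref{pr5b} as governing the whole tensor, without justification. You instead bypass this by exhibiting an orthogonal ideal decomposition $\G=W^\perp\oplus\bigoplus_j\G_j$ with each $\G_j$ isomorphic to the Lie algebra of $G(1,|I_j|,(|v_j|,\ldots,|v_j|))$, and then invoking Proposition~\ref{pr1} to identify each simply-connected factor with a hyperbolic space. This is cleaner and more informative: it shows that the symmetric cyclic solvable groups are precisely Riemannian products of Euclidean and real hyperbolic factors, a geometric statement the paper's computation does not make visible.

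One small wording issue: your exclusion of zero columns via ``$f_i\in[\G,\G]$'' presupposes what you want, since $\Om_i=0$ is exactly the case where $f_i\notin[\G,\G]$. The honest argument is that any such $f_i$ is central and can be absorbed into the $\R^r$ factor, so without loss of generality every $\Om_i$ is non-zero; this is implicit in the normal form produced by Theorem~\ref{pr5}. With that adjustment the proof is complete.
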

\begin{proof}  The cyclic Riemannian metrics on $\widetilde{\mathrm{SL}(2,\R)}$ listed in Theorem \ref{sl} are not symmetric and we can conclude by using Theorem \ref{main} and Proposition \ref{pr5b}.
\end{proof}

\begin{exem}
	
	Let $\Om$ be $(q,q)$  orthogonal matrix. By virtue of Theorems \ref{einstein} and \ref{symmetric}, $(G(q,q,\Om),h_0)$ is a symmetric cyclic Einstein Riemannian Lie group.
\end{exem}

We end this paper, by giving the list of connected and simply-connected cyclic Riemannian Lie groups of dimension $\leq5$.

\begin{center}
	\begin{tabular}{|c|c|c|}
		\hline
		Dimension&Riemannian Lie groups&Conditions\\
		\hline
		2&$(G(1,1,(\la)),h_0)$&$\la\not=0$\\
		\hline
		&$(\R\times G(1,1,(\la)),\prs_0\oplus h_0)$&$\la\not=0$\\
		3	&$(G(1,2,(\la_1,\la_2)),h_0)$&$(\la_1,\la_2)\not=(0,0)$\\
		&$(\widetilde{\mathrm{SL}(2,\R)},h_1)$&\\
		\hline
		&$(\R^2\times G(1,1,(\la)),\prs_0\oplus h_0)$&$\la\not=0$\\
		&$\R\times (G(1,2,(\la_1,\la_2)),\prs_0\oplus h_0)$&$(\la_1,\la_2)\not=(0,0)$\\
		4&$(\R\times\widetilde{\mathrm{SL}(2,\R)},\prs_0\oplus h_1)$&\\
		&$(G(1,3,(\la_1,\la_2,\la_3)),h_0)$&
		$(\la_1,\la_2,\la_3)\not=(0,0,0)$\\
		&$(G(2,2,\Om),h_0)$&
		$\det\Om\not=0$\\
		\hline
		&$(\R^3\times G(1,1,(\la)),\prs_0\oplus h_0)$&$\la\not=0$\\
		&$\R^2\times (G(1,2,(\la_1,\la_2)),\prs_0\oplus h_0)$
		&$(\la_1,\la_2)\not=(0,0)$\\
		&$(\R^2\times\widetilde{\mathrm{SL}(2,\R)},\prs_0\oplus h_1)$&\\
		5	&$(\R\times G(1,3,(\la_1,\la_2,\la_3)),\prs_0\oplus h_0)$&$(\la_1,\la_2,\la_3)\not=(0,0,0)$\\
		&$(\R\times G(2,2,\Om),\prs_0\oplus h_0)$&$\det\Om\not=0$\\
		&$(G(2,3,\left(\begin{matrix}P\\Q\end{matrix}\right),h_0)$&$P\wedge Q\not=0$\\
		&$(G(1,4,(\la_1,\la_2,\la_3,\la_4)),h_0)$&
		$(\la_1,\la_2,\la_3,\la_4))\not=0$\\
		\hline

	\end{tabular}
	
	\captionof{table}{Connected and simply-connected cyclic Riemannian non-abelian Lie groups of dimension $\leq5$.\label{1}}

\end{center}

\bibliographystyle{elsarticle-num}

\end{document}